\documentclass[11pt,a4paper]{article}
\usepackage[margin=25mm]{geometry}
\usepackage{amsmath,amssymb,amsthm,mathrsfs,mathtools,enumitem}
\usepackage[T1]{fontenc}
\usepackage[hidelinks]{hyperref}
\newtheorem{theorem}{Theorem}[section]
\newtheorem{lemma}[theorem]{Lemma}
\newtheorem{proposition}[theorem]{Proposition}
\newtheorem{corollary}[theorem]{Corollary}
\newtheorem{remark}[theorem]{Remark}
\newtheorem{example}[theorem]{Example}

\title{Reflected BSDEs under random horizons:
	Sharp support criteria, irregular reflection, and Cox reconstruction}
\author{
	\textbf{Youssef Ouknine}\\[3pt]
	\small Affiliate Professor, Africa Business School,\\
	\small Mohammed VI Polytechnic University (UM6P),\\
	\small Rabat Campus, Rocade Rabat--Salé, 11103, Morocco\\
	\small Member, Hassan II Academy of Sciences and Technology,\\
	\small Km 4, Avenue Mohammed VI, Rabat, Morocco\\
	\small \url{youssef.ouknine@um6p.ma}\\
	\and
	\textbf{Badr Elmansouri}\\[3pt]
	\small Cadi Ayyad University,\\
	\small National School of Applied Sciences of Marrakech,\\
	\small BP 575, Avenue Abdelkrim Khattabi,\\
	\small 40000 Guéliz, Marrakech, Morocco\\
	\small \url{b.elmansouri@uca.ac.ma}
}

\date{\today}

\begin{document}\maketitle
\begin{abstract}
We study reduction and reconstruction of reflected backward stochastic differential equations (RBSDEs) across random horizons. We establish optional and predictable contact-measure transfer criteria by dual projection, prove a Cox survival identity for arbitrary integrable reflection measures (including atoms), and give a Brownian Cox reconstruction theorem with continuous obstacles and a converse within a specified reduction class. We also isolate componentwise minimality criteria for irregular obstacles and predictable reflection. The unresolved semimartingale and obstacle transport problems for general non-Cox invariance times are formulated as open questions rather than asserted as theorems. The paper distinguishes established results from conditional constructions.
\end{abstract}
\tableofcontents

\section{Introduction}
\label{sec:introduction}

Backward stochastic differential equations (BSDEs) have
become an important tool in stochastic analysis, nonlinear
expectations, stochastic control, and mathematical finance.
Since the seminal work of Pardoux and Peng
\cite{PardouxPeng}, the theory has developed well beyond
the classical Brownian framework, encompassing more
general filtrations, discontinuous martingales, random
terminal horizons, and nonlinear constraints.

Reflected backward stochastic differential equations
(RBSDEs) constitute a particularly significant extension.
Their classical formulation, developed by El Karoui,
Kapoudjian, Pardoux, Peng and Quenez~\cite{ElKaroui},
provides a probabilistic approach to optimal stopping
and obstacle problems. In a Brownian filtration, a
reflected solution $(Y,Z,K)$ typically satisfies
\begin{equation*}
	\label{intro-classical}
	Y_t
	=
	\xi
	+\int_t^T f(s,Y_s,Z_s)\,ds
	+K_T-K_t
	-\int_t^T Z_s\,dW_s,
	\qquad
	Y_t\geq L_t,
\end{equation*}
together with the Skorokhod minimality condition
\[
\int_{(0,T]}(Y_t-L_t)\,dK_t=0.
\]
The increasing process $K$ enforces the obstacle
constraint, while the minimality condition ensures
that reflection acts only when the solution reaches
the obstacle. This contact property is central to the
connection between RBSDEs, Snell envelopes, and
nonlinear optimal stopping.

\paragraph{Irregular obstacles and generalized reflection.}

The classical theory commonly relies on regularity
assumptions on the obstacle and the solution.
Such assumptions can be restrictive in applications
involving discontinuous information, irregular
exercise rewards, or financial contracts subject to
default and early termination.

The work of Grigorova, Imkeller, Offen, Ouknine
and Quenez~\cite{Grigorova} develops reflected BSDEs
with obstacles that are not necessarily
right-continuous. Their approach relies on strong
optional supermartingales, Mertens decomposition,
and the Gal'chouk--Lenglart formula, and establishes
connections with optimal stopping under nonlinear
expectations.

Further contributions include the general-filtration
analysis of Baadi and Ouknine~\cite{Baadi}, the
predictable-obstacle framework of Bouhadou and
Ouknine~\cite{Bouhadou}, and the study of doubly
reflected BSDEs in the predictable setting by
Arharas, Bouhadou and Ouknine~\cite{Arharas}.
Akdim, Haddadi and Ouknine~\cite{AkdimHaddadiOuknine}
also investigate strong Snell envelopes and reflected
BSDEs with regulated trajectories.

These developments highlight the importance of
distinguishing optional and predictable reflection.
For irregular optional obstacles, the reflection
mechanism may involve a predictable increasing
process and a purely discontinuous optional
increasing process, whose minimality conditions
are formulated using different contact conventions.
Consequently, the classical continuous-obstacle
Skorokhod condition cannot simply be transferred
to arbitrary optional or predictable obstacles.

\subsection{Default times and reflected BSDEs}

A complementary extension of BSDE theory arises
when the model includes a random time $\tau$,
representing, for example, the default of a
counterparty, the failure of a system, or the
termination of a financial contract.

In credit-risk models, the reference filtration
$\mathbb F$ describes the information available
before default, while an enlarged filtration
$\mathbb G$ incorporates the observation of
$\tau$. The default time is then a
$\mathbb G$-stopping time, although it need not
be an $\mathbb F$-stopping time.

The mathematical foundations of default modeling
through enlargement of filtrations were developed
in particular by Bielecki, Jeanblanc and
Rutkowski~\cite{BieleckiJeanblancRutkowski},
and Jeanblanc and Le Cam~\cite{JeanblancLeCam}.
The conditional-density approach of El Karoui,
Jeanblanc and Jiao~\cite{ElKarouiJeanblancJiao}
and the survival-process approach of Jeanblanc
and Song~\cite{JeanblancSongSurvival} provide
further tools for studying random times and
their associated semimartingale decompositions.

The martingale representation problem in enlarged
filtrations is particularly important for backward
stochastic equations. Jeanblanc and
Song~\cite{JeanblancSongPRP} investigate the
martingale representation property under
progressive enlargement, while Aksamit,
Jeanblanc and Rutkowski~\cite{AksamitJeanblancRutkowski}
develop integral representation results for
martingales stopped at a random time.
Related enlargement formulas for multiple,
possibly non-ordered default times are established
by Jeanblanc, Li and Song~\cite{JeanblancLiSong}.

In this setting, a BSDE may be driven by both
a Brownian motion and a compensated default
martingale. The latter introduces a jump
coefficient and a compensator that must be
taken into account when comparing equations
in the reference and enlarged filtrations.

The analysis of BSDEs with default jumps and
their applications to nonlinear pricing and
optimal stopping has been developed, among
others, by Dumitrescu, Grigorova, Quenez
and Sulem~\cite{DumitrescuDefault}, and by
Dumitrescu, Quenez and Sulem~\cite{DumitrescuAmerican} (see also \cite{ArharasOuknine2024}).

More recently, Elmansouri~\cite{Elmansouri2025}
established existence and uniqueness for reflected
BSDEs with a default time and an obstacle that is
not necessarily right-continuous, under stochastic
Lipschitz assumptions. In a related study,
Elmansouri~\cite{ElmansouriJDG2026} examined BSDEs
and RBSDEs with default time and stochastic
Lipschitz drivers, together with applications
to the pricing of European and American options.

Elmansouri and El Otmani~\cite{ElmansouriElOtmaniOptimal}
investigated optimal control over split stopping
times in a defaultable framework, establishing
connections with reflected BSDEs with irregular
obstacles and nonlinear expectations.
Their work~\cite{ElmansouriElOtmaniJumps}
also extends reflected BSDEs with default times
to settings involving Brownian and jump noise,
with applications to optimal stopping and
the valuation of American options.

In addition, Elmansouri and El Otmani
\cite{ElmansouriElOtmaniPreprint} studied
well-posedness for default-time RBSDEs with
irregular barriers and their relationship
with nonlinear optimal stopping.
These contributions provide existence,
uniqueness, and control-theoretic results
for specific defaultable RBSDE formulations.

\subsection{Random horizons, optimal stopping,	and filtration reduction}

Beyond the existence theory of BSDEs and RBSDEs,
a separate line of research studies how optimal
stopping and valuation problems can be transferred
between the reference filtration and its
progressive enlargement.

Choulli and Alsheyab~\cite{ChoulliAlsheyab}
establish relationships between optimal stopping
problems under a random horizon and corresponding
reference-filtration problems. Their results
include the parametrization of enlarged-filtration
reward processes, representations of Snell
envelopes, and connections between optimal
stopping times.

Related questions for two-player stopping
problems are addressed by Elmansouri,
Marzougue and El Jamali~\cite{ElmansouriDynkin},
who examine Dynkin games under progressive
enlargement and establish corresponding
reference-filtration reduction results.

A further important framework is the theory
of invariance times developed by Cr\'epey
and Song~\cite{CrepeySong}. Under suitable
filtration and measure-change hypotheses,
this theory provides a mechanism for
transporting reference-filtration local
martingales, stopped immediately before
a random time, into the enlarged filtration.

However, martingale transport and reflection
minimality are different mathematical properties.
Even when the martingale component of a BSDE
admits a suitable transformation, this does
not automatically establish that its
finite-variation reflection measure preserves
the required contact condition.

\subsection{The reflection-support problem}

The additional difficulty can be described
directly in terms of the random measure
associated with an increasing reflection
process.

Consider an $\mathbb F$-predictable increasing
process $A$ and a nonnegative predictable
contact integrand $H$. Suppose that the
enlarged-filtration formulation provides
\begin{equation*}
	\label{intro-stopped}
	\int_{(0,T]}
	H_t\mathbf1_{\{t\leq\tau\}}\,dA_t
	=0
	\qquad\text{a.s.}
\end{equation*}

To recover the reference-filtration
minimality condition, one needs to deduce
\[
\int_{(0,T]}H_t\,dA_t=0
\qquad\text{a.s.}
\]

This implication is not automatic:
stopping at $\tau$ may remove part of
the support of the reflection measure.

Writing $q$ for the predictable survival
projection, the corresponding projection
identity takes the form
\begin{equation*}
	\label{intro-projection}
	\mathbb E\int_{(0,T]}
	H_t\mathbf1_{\{t\leq\tau\}}\,dA_t
	=
	\mathbb E\int_{(0,T]}H_tq_t\,dA_t.
\end{equation*}

The relevant positivity criterion is
therefore expressed relative to the
mean measure induced by $dA$, rather
than merely relative to Lebesgue measure.
This distinction becomes essential when
the increasing process contains atoms
or singular components.

For optional contact conditions, the
corresponding analysis involves the dual
optional projection of the stopped
reflection measure and the strict
pre-default interval. The difference
between the indicators
$\mathbf1_{\{t<\tau\}}$ and
$\mathbf1_{\{t\leq\tau\}}$ must also
be taken into account whenever the
reflection measure may charge the
graph of $\tau$.

These observations motivate a systematic
study of the support properties of
reflection measures under random horizons.

\subsection{Main contributions of the paper}

The present paper focuses on the
measure-theoretic conditions ensuring
that reflection minimality is preserved
under stopping, and on their interaction
with the reconstruction of reflected
BSDEs.

Our contributions can be summarized
as follows.

First, we establish sharp optional and
predictable contact-transfer criteria
using dual projections and the mean
measures associated with increasing
processes. The resulting conditions
identify when stopped and unstopped
contact identities are equivalent,
including when the reflection measures
possess atomic or singular components.

Second, we specialize the analysis
to Cox random times. The explicit
conditional survival structure yields
strong contact equivalence, and we
extend the survival-weighting principle
to general random measures determined
by the reference information.
Under uniform survival positivity,
we also obtain quantitative estimates
controlling approximate contact errors.

Third, in a Brownian Cox framework,
we establish an explicit reconstruction
of a reflected BSDE with continuous
obstacle from a reference-filtration
reflected equation. The transformed
generator incorporates the default
intensity and prescribed recovery,
while the enlarged solution is
reconstructed through its continuous
martingale, default-jump, and
reflection components.

The converse is obtained within an
explicitly specified reduction class,
without claiming uniqueness among
arbitrary enlarged-filtration solutions.

Fourth, for irregular obstacles,
we apply the contact-transfer criteria
separately to the predictable and
purely discontinuous optional
reflection components. We clarify
the relevant endpoint and pointwise
jump-contact conventions, while
distinguishing these minimality
statements from a complete Mertens
reconstruction theorem.

The novelty of the present analysis
thus lies in isolating the exact
support conditions required for
reflection-contact transfer and
combining them with a concrete
Cox reconstruction mechanism.
It does not consist in asserting
general existence and uniqueness
for default-time RBSDEs with
irregular obstacles, which have
already been investigated in
the aforementioned literature.

\subsection{Scope and organization}

Our results do not establish an
unconditional reduction--reconstruction
theorem for arbitrary optional obstacles
or general non-Cox invariance times.
Such an extension requires additional
hypotheses concerning martingale
representation, compensator transport,
obstacle reduction, default jumps,
and the reconstruction of Mertens
reflection components.

Section~\ref{sec:preliminaries} recalls
the probabilistic and measure-theoretic
framework. Section~\ref{sec:contact-transfer}
establishes the optional and predictable
contact-transfer criteria, and
Section~\ref{sec:cox-survival} develops
their Cox specialization, including
general reference random measures.

The Brownian Cox reconstruction theorem
is presented in
Section~\ref{sec:cox-rbsde}.
The subsequent sections investigate
irregular Mertens contact conditions,
quantitative stability, and the
remaining questions concerning
invariance-time reconstruction
beyond the Cox setting.


\section{Measure-theoretic preliminaries}
\label{sec:preliminaries}

This section introduces the probabilistic framework and
the measure-theoretic tools used throughout the paper.
Particular attention is devoted to filtration enlargement,
random Stieltjes measures, and dual optional and predictable
projections. These notions will be essential for studying
the transfer of reflection contact conditions across
random horizons.

\subsection{Probabilistic framework and filtration enlargement}

Let $(\Omega,\mathcal A,\mathbb P)$ be a complete
probability space endowed with a reference filtration
\[
\mathbb F=(\mathcal F_t)_{t\geq0}
\]
satisfying the usual conditions.

We consider a second filtration
\[
\mathbb G=(\mathcal G_t)_{t\geq0},
\]
also satisfying the usual conditions, such that
\begin{equation}\label{eq:filtration-inclusion}
	\mathcal F_t\subseteq\mathcal G_t,
	\qquad t\geq0.
\end{equation}

Let $\tau$ be a nonnegative
$\mathbb G$-stopping time, not necessarily an
$\mathbb F$-stopping time, and fix a finite
deterministic horizon $T>0$.

The inclusion \eqref{eq:filtration-inclusion}
describes a situation in which the enlarged
filtration $\mathbb G$ contains additional
information relative to the reference filtration
$\mathbb F$.

Two classical enlargement mechanisms are
particularly relevant.

\paragraph{Progressive enlargement.}

Let $\tau$ be a nonnegative random time,
not necessarily an $\mathbb F$-stopping time.

The progressive enlargement of $\mathbb F$
by $\tau$ is the smallest right-continuous
filtration containing $\mathbb F$ for which
$\tau$ becomes a stopping time.

It can be defined by
\begin{equation*}\label{eq:progressive-enlargement}
	\mathcal G_t
	=
	\bigcap_{s>t}
	\left(
	\mathcal F_s
	\vee
	\sigma(\tau\wedge s)
	\right),
	\qquad t\geq0,
\end{equation*}
with the usual completion.

In particular,
\[
\{\tau\leq t\}\in\mathcal G_t,
\qquad t\geq0,
\]
whereas this event need not belong to
$\mathcal F_t$.

Thus, the additional information concerns
the occurrence of the random time and is
revealed progressively.

\paragraph{Initial enlargement.}

Let $L$ be an $\mathcal A$-measurable random
variable representing additional information
available at time zero.

The initial enlargement of $\mathbb F$
by $L$ is defined by
\begin{equation*}\label{eq:initial-enlargement}
	\mathcal G_t
	=
	\bigcap_{s>t}
	\left(
	\mathcal F_s\vee\sigma(L)
	\right),
	\qquad t\geq0,
\end{equation*}
again with the usual completion.

In this construction, the entire random
variable $L$ is available from the beginning,
in contrast to progressive enlargement,
where information about $\tau$ is revealed
over time.

The two mechanisms may lead to different
semimartingale decompositions and martingale
representation properties.

For instance, an $\mathbb F$-Brownian motion
need not remain a Brownian motion in an
initially or progressively enlarged filtration.

Its decomposition may involve an additional
finite-variation term, often referred to
as an information drift, whenever the
corresponding semimartingale property holds.

\begin{remark}[Scope of the filtration framework]
	The measure-theoretic results developed below
	are formulated under the general inclusion
	$\mathbb F\subseteq\mathbb G$, with $\tau$
	assumed to be a $\mathbb G$-stopping time. They do not require $\mathbb G$ to coincide
	with the progressive enlargement of $\mathbb F$
	by $\tau$. In the case of an initial enlargement by $L$,
	the additional assumption that $\tau$ is
	a $\mathbb G$-stopping time must still be imposed. More specific assumptions on the enlargement
	structure will be introduced when studying
	the reduction and reconstruction of reflected
	backward stochastic differential equations.
\end{remark}

\subsection{Random Stieltjes measures}

Throughout the paper, unless otherwise stated,
an increasing process $a=(a_t)_{0\leq t\leq T}$
is assumed to be nonnegative,
$\mathbb F$-adapted, right-continuous,
and such that
\begin{equation}\label{eq:increasing-assumptions}
	a_0=0,
	\qquad
	\mathbb E[a_T]<\infty.
\end{equation}

Such a process induces a finite positive
random Stieltjes measure on $(0,T]$,
defined pathwise by
\begin{equation*}\label{Stieltjes.2}
	da(\omega,(s,t])
	=
	a_t(\omega)-a_s(\omega),
	\qquad 0\leq s<t\leq T.
\end{equation*}

For every nonnegative measurable process $H$
for which the integral is well-defined,
we use the notation
\begin{equation*}\label{eq:stieltjes-integral}
	H\cdot da
	:=
	\int_{(0,T]}H_t\,da_t.
\end{equation*}

An important feature is that the random measure
$da$ need not be absolutely continuous with
respect to Lebesgue measure $dt$.

In particular, $da$ may possess absolutely
continuous, atomic, or singular continuous
components.

The following examples illustrate these
different possibilities.

\begin{table}[htbp]
	\centering
	\renewcommand{\arraystretch}{1.6}
	\small
	\begin{tabular}{|c|c|c|}
		\hline
		\textbf{Increasing process}
		&
		\textbf{Stieltjes measure}
		&
		\textbf{Nature}
		\\
		\hline
		$a_t=t$
		&
		$dt$
		&
		Absolutely continuous
		\\
		\hline
		$a_t=\displaystyle\int_0^t\alpha_s\,ds$
		&
		$\alpha_t\,dt$
		&
		Absolutely continuous
		\\
		\hline
		$a_t=\mathbf1_{\{t\geq t_0\}}$
		&
		$\delta_{t_0}(dt)$
		&
		Atomic
		\\
		\hline
		$a_t=C(t)$
		&
		$dC(t)$
		&
		Singular continuous
		\\
		\hline
	\end{tabular}
	\caption{Examples of increasing processes and
		their associated Stieltjes measures.}
	\label{tab:stieltjes-types}
\end{table}

Here $\alpha$ is a nonnegative,
progressively measurable process satisfying
the required integrability conditions,
$t_0\in(0,T]$ is deterministic,
$\delta_{t_0}$ denotes the Dirac measure at
$t_0$, and $C$ is the classical Cantor
function on $[0,1]$, with $T=1$.

More generally, for almost every $\omega$,
the finite Stieltjes measure $da(\omega,dt)$
admits a Lebesgue decomposition
\[
da(\omega,dt)
=
da^{ac}(\omega,dt)
+
da^{s}(\omega,dt),
\]
where $da^{ac}$ is absolutely continuous
with respect to $dt$ and $da^{s}$ is singular.

The singular component may itself contain
an atomic part and a singular continuous part.

This distinction is important in the theory
of reflected backward stochastic differential
equations. Indeed, the increasing reflection process
is not generally absolutely continuous
with respect to time. Even when the reflection process is continuous,
its associated measure may concentrate on
a set of zero Lebesgue measure. When discontinuous or irregular obstacles
are considered, reflection may also involve
increasing components carrying atomic mass. Consequently, contact and minimality
conditions must be formulated with respect
to the reflection measure itself rather
than only with respect to Lebesgue measure.

\subsection{Financial interpretation}

Filtration enlargement provides a natural
framework for describing the impact of
additional information on the valuation
and management of financial risks.

In credit-risk modeling, the reference
filtration $\mathbb F$ may represent the
market information generated by observable
economic and financial risk factors.

A random time $\tau$ may represent the default
time of a counterparty or the termination
time of a financial contract.

The progressively enlarged filtration
$\mathbb G$ then incorporates the observation
of default as it occurs. A defaultable financial claim with contractual
maturity $T$ may terminate at the random
horizon
\[
T\wedge\tau.
\]

In contrast, initial enlargement may describe
an agent who knows an additional random signal
concerning future economic conditions or a
terminal market variable from time zero.

Such additional information may change
the semimartingale decomposition of price
processes and, consequently, the dynamics
of the associated backward stochastic
differential equations.

Reflected BSDEs arise naturally in the
valuation of financial contracts involving
an early exercise or termination option. The obstacle represents the exercise value
or another contractual constraint, while
the increasing reflection process ensures
that the value process respects that constraint. The reflection measure records the times
at which the constraint becomes binding. When the contract terminates at $\tau$,
only the portion of the reflection measure
associated with the relevant pre-default
interval contributes to the stopped equation. 


\begin{example}[Defaultable claim with early exercise]
	\label{ex:defaultable-claim}
	
	Consider a financial contract with maturity
	$T$ and counterparty default time $\tau$.
	
	Let $\xi$ denote its promised terminal
	payoff in the absence of default, and let
	$R_\tau$ denote the recovery payment when
	default occurs before or at maturity.
	
	The payoff at the random horizon
	$T\wedge\tau$ can be written as
	\begin{equation*}\label{eq:defaultable-payoff}
		\Xi
		=
		\xi\mathbf1_{\{\tau>T\}}
		+
		R_\tau\mathbf1_{\{\tau\leq T\}}.
	\end{equation*}
	
	If the contract also includes an early
	exercise feature, its valuation may be
	formulated through a reflected BSDE
	stopped at $T\wedge\tau$.
	
	The obstacle represents the exercise payoff,
	and the reflection process enforces the
	corresponding value constraint. The random horizon determines when the
	contract terminates, whereas the reflection
	measure determines when intervention is
	required to maintain the obstacle constraint.
\end{example}
This example motivates the study of
contact measures restricted to
$[0,\tau)$ or $[0,\tau]$.

\subsection{Dual optional and predictable projections}

We next recall the dual optional and
predictable projections of integrable
random measures relative to the reference
filtration $\mathbb F$.

Let $\mu$ be a nonnegative random measure
on $(0,T]$ satisfying the standard
measurability assumptions for dual projection
and
\[
\mathbb E[\mu((0,T])]<\infty.
\]

Its dual optional projection relative to
$\mathbb F$, denoted by $\mu^{o,\mathbb F}$,
is the $\mathbb F$-optional random measure
characterized by
\begin{equation*}\label{dualO}
	\mathbb E\left[
	\int_{(0,T]}H_t\,\mu(dt)
	\right]
	=
	\mathbb E\left[
	\int_{(0,T]}H_t\,
	\mu^{o,\mathbb F}(dt)
	\right]
\end{equation*}
for every bounded nonnegative
$\mathbb F$-optional process $H$.

Similarly, the dual predictable projection
$\mu^{p,\mathbb F}$ is characterized by
\begin{equation*}\label{dualP}
	\mathbb E\left[
	\int_{(0,T]}H_t\,\mu(dt)
	\right]
	=
	\mathbb E\left[
	\int_{(0,T]}H_t\,
	\mu^{p,\mathbb F}(dt)
	\right]
\end{equation*}
for every bounded nonnegative
$\mathbb F$-predictable process $H$.

Both identities extend to arbitrary
nonnegative admissible integrands
by monotone approximation.

It is important to distinguish these
dual projections of random measures
from the ordinary optional and predictable
projections of stochastic processes.

In particular, if
\[
A_t=\mu((0,t]),
\]
the dual optional projection of $\mu$
is not, in general, obtained by taking
the pointwise optional projection of
the cumulative process $A$.

The dual projection is characterized by
the preservation of expected integrals
against the corresponding class of
optional or predictable test processes.

\subsection{Mean measures and zero-contact conditions}

Let $\mathscr H$ denote either the
$\mathbb F$-optional sigma-field
$\mathcal O(\mathbb F)$ or the
$\mathbb F$-predictable sigma-field
$\mathcal P(\mathbb F)$, restricted to
$\Omega\times(0,T]$.

For an increasing process $a$ satisfying
\eqref{eq:increasing-assumptions},
define its mean measure on $\mathscr H$ by
\begin{equation*}\label{eq:mean-measure}
	m_a^{\mathscr H}(B)
	=
	\mathbb E\left[
	\int_{(0,T]}
	\mathbf1_B(\omega,t)\,da_t(\omega)
	\right],
	\qquad B\in\mathscr H.
\end{equation*}

This is a finite positive measure since
\[
m_a^{\mathscr H}
(\Omega\times(0,T])
=
\mathbb E[a_T]<\infty.
\]

For every nonnegative
$\mathscr H$-measurable process $H$,
we have
\begin{equation*}\label{eq:mean-integration}
	\mathbb E\left[
	\int_{(0,T]}H_t\,da_t
	\right]
	=
	\int_{\Omega\times(0,T]}
	H(\omega,t)\,
	m_a^{\mathscr H}(d\omega,dt).
\end{equation*}

Since both sides are nonnegative,
we obtain the following equivalence:
\begin{equation*}\label{eq:zero-mean-measure}
	\int_{(0,T]}H_t\,da_t=0
	\quad\mathrm{a.s.}
	\quad\Longleftrightarrow\quad
	H=0
	\quad m_a^{\mathscr H}\text{-a.e.}
\end{equation*}

Thus, a zero-contact condition can be
expressed as a null-set property of the
mean measure associated with the
increasing process.

This formulation will be particularly
useful when comparing stopped and
unstopped reflection measures.

\begin{lemma}[Zero-integral principle]
	\label{zero}
	
	Let $X$ and $Y$ be nonnegative random
	variables, possibly taking values in
	$[0,\infty]$, such that
	\[
	\mathbb E[X]=\mathbb E[Y].
	\]
	
	Then
	\[
	X=0\quad\mathrm{a.s.}
	\quad\Longleftrightarrow\quad
	Y=0\quad\mathrm{a.s.}
	\]
	
\end{lemma}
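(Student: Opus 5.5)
The argument rests on one elementary fact: a nonnegative random variable $X$ with values in $[0,\infty]$ satisfies $X=0$ a.s.\ if and only if $\mathbb E[X]=0$. By the symmetry of the hypothesis $\mathbb E[X]=\mathbb E[Y]$, it suffices to prove one implication.

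\textbf{Step 1 (the elementary fact).} If $X=0$ a.s., then $\mathbb E[X]=0$ directly from the definition of the Lebesgue integral of a nonnegative function, and this remains valid in $[0,\infty]$. Conversely, suppose $\mathbb E[X]=0$. For each $n\ge1$, Markov's inequality gives
\[
\mathbb P(X\ge 1/n)\le n\,\mathbb E[X]=0.
\]
Equivalently, one can use $X\ge \tfrac1n\mathbf 1_{\{X\ge1/n\}}$ and monotonicity of the integral. Since
\[
\{X>0\}=\bigcup_n\{X\ge1/n\},
\]
countable subadditivity yields $\mathbb P(X>0)=0$. This argument also covers the event $\{X=\infty\}$, because that event is contained in $\{X\ge1\}$.

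\textbf{Step 2 (transfer through the equal expectations).} Assume $X=0$ a.s. By Step 1, $\mathbb E[X]=0$. Hence $\mathbb E[Y]=\mathbb E[X]=0$, and Step 1 applied to $Y$ gives $Y=0$ a.s. The reverse implication follows by exchanging the roles of $X$ and $Y$.

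There is no real obstacle here. The only point requiring care is that the values may be infinite. The equality $\mathbb E[X]=\mathbb E[Y]$ holds in $[0,\infty]$, and the argument only ever uses the value $0$, so no integrability assumption is needed and no subtraction of possibly infinite quantities occurs.
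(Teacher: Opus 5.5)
Your proof is correct and follows the paper's argument: both rest on the chain $X=0$ a.s.\ $\iff \mathbb E[X]=0 \iff \mathbb E[Y]=0 \iff Y=0$ a.s. The only difference is that you prove the fact ``nonnegative and zero mean implies zero a.s.'' via Markov's inequality and countable subadditivity (including the value $+\infty$), whereas the paper takes it as known.
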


\begin{proof}
	
	Since $X$ and $Y$ are nonnegative,
	each variable vanishes almost surely
	if and only if its expectation is zero.
	
	Thus,
	\[
	X=0\quad\mathrm{a.s.}
	\quad\Longleftrightarrow\quad
	\mathbb E[X]=0.
	\]
	
	By the assumed equality of expectations,
	\[
	\mathbb E[X]=0
	\quad\Longleftrightarrow\quad
	\mathbb E[Y]=0.
	\]
	
	Finally, nonnegativity of $Y$ gives
	\[
	\mathbb E[Y]=0
	\quad\Longleftrightarrow\quad
	Y=0\quad\mathrm{a.s.}
	\]
	
	Combining these equivalences proves
	the result.
	
\end{proof}

\begin{remark}[Interpretation for contact measures]
	\label{remark-zero-contact}
	
	Let $\mu$ be a nonnegative integrable
	random measure and let $H\geq0$ be
	$\mathbb F$-optional.
	
	By the defining property of the
	dual optional projection,
	\[
	\mathbb E\int_{(0,T]}H_t\,\mu(dt)
	=
	\mathbb E\int_{(0,T]}H_t\,
	\mu^{o,\mathbb F}(dt).
	\]
	
	Consequently, Lemma~\ref{zero} yields
	\[
	\int_{(0,T]}H_t\,\mu(dt)=0
	\quad\mathrm{a.s.}
	\]
	if and only if
	\[
	\int_{(0,T]}H_t\,
	\mu^{o,\mathbb F}(dt)=0
	\quad\mathrm{a.s.}
	\]
	
	An analogous assertion holds for
	predictable integrands and dual
	predictable projections.
	
	Notice that this equivalence does not
	assert pathwise equality of the two
	integrals.
	
	It relies only on equality of their
	expectations and their nonnegativity.
	
	This observation provides the basic
	mechanism for transferring reflection
	contact conditions between random
	measures and their dual projections.
	
\end{remark}

\begin{example}[Positivity relative to the
	reflection measure]
	\label{ex:measure-support}
	
	Let $T=2$ and consider
	$
	a_t=\mathbf1_{\{t\geq1\}}.
	$
	Its associated Stieltjes measure is
	$
	da_t=\delta_1(dt).
	$
	
	Define the nonnegative deterministic process
	$q_t=\mathbf1_{\{t\neq1\}}$. Although $q$ is
	positive Lebesgue-almost everywhere,
	\[
	\int_{(0,2]}q_t\,da_t=q_1=0,
	\qquad
	\int_{(0,2]}da_t=1.
	\]
	Thus, positivity with respect to
	Lebesgue measure does not imply
	positivity with respect to the
	reflection measure.
	
	The relevant condition for transferring
	contact identities must therefore be
	formulated relative to the measure
	induced by $da$.
	
\end{example}

The preceding definitions and observations
provide the measure-theoretic foundation
for the optional and predictable contact
transfer criteria developed in the next
section.

\section{Exact transfer of optional and predictable contact}
\label{sec:contact-transfer}

We now investigate the transfer of contact conditions
between an increasing process in the reference filtration
$\mathbb F$ and its restriction to the stochastic interval
determined by a random time $\tau$.

The main difficulty is that a contact condition imposed
before $\tau$ need not determine the behavior of the
reflection measure on the entire deterministic horizon
$[0,T]$.

Throughout this section, $a$ satisfies the standing
assumptions of Section~\ref{sec:preliminaries}.

We introduce the two stopped random measures
\[
\mu_a^<(dt)
=
\mathbf1_{\{t<\tau\}}\,da_t,
\qquad
\mu_a^{\leq}(dt)
=
\mathbf1_{\{t\leq\tau\}}\,da_t.
\]

These measures differ only at the random time $\tau$.
More precisely, for every nonnegative measurable
process $H$,
\begin{equation}\label{eq:boundary-difference}
	\begin{aligned}
		&\int_{(0,T]}
		H_t\mathbf1_{\{t\leq\tau\}}\,da_t
		=\int_{(0,T]}
		H_t\mathbf1_{\{t<\tau\}}\,da_t+
		H_\tau\Delta a_\tau
		\mathbf1_{\{0<\tau\leq T\}},
	\end{aligned}
\end{equation}
where
$
\Delta a_t=a_t-a_{t-}
$ is the left-jump size at $t$.

Thus, in view of \eqref{eq:boundary-difference}, the two endpoint conventions agree whenever
$a$ is continuous, but they may differ when the
reflection measure has an atom at $\tau$.

We first examine optional contact conditions and
then turn to their predictable counterparts.

\subsection{Optional contact conditions}

Let
\begin{equation}\label{AZEMA}
G_t
=
\mathbb P(\tau>t\mid\mathcal F_t),
\qquad t\geq0,
\end{equation}
denote the Az\'ema supermartingale associated
with $\tau$ relative to $\mathbb F$.

More precisely, $G$ is the optional projection
of the process
$\mathbf1_{\{t<\tau\}}$.
We use its usual right-continuous version.

The following theorem characterizes when an
optional contact condition restricted to
$[0,\tau)$ can be transferred to $[0,T]$.

\begin{theorem}[Optional contact: exact criterion]
	\label{optional}
	
	Let $H\geq0$ be an $\mathbb F$-optional process,
	and let
	$
	\nu=(\mu_a^<)^{o,\mathbb F}
	$
	denote the dual optional projection of the
	stopped random measure $\mu_a^<$.
	
	Then
	\begin{equation}\label{optionalEq}
		\int_{(0,T]}H_t\,\mu_a^<(dt)=0
		\quad\mathrm{a.s.}
		\quad\Longleftrightarrow\quad
		\int_{(0,T]}H_t\,\nu(dt)=0
		\quad\mathrm{a.s.}
	\end{equation}
	
	Moreover, the dual optional projection satisfies
	\begin{equation*}\label{eq:optional-survival-measure}
		\nu(dt)=G_t\,da_t,
	\end{equation*}
	in the sense of optional random measures. Consequently,
	\begin{equation}\label{eq:optional-weighted-contact}
		\int_{(0,T]}
		H_t\mathbf1_{\{t<\tau\}}\,da_t=0
		\quad\mathrm{a.s.}
		\quad\Longleftrightarrow\quad
		\int_{(0,T]}H_tG_t\,da_t=0
		\quad\mathrm{a.s.}
	\end{equation}
	
	Let $m_a^o$ denote the mean measure
	associated with $da$ on the
	$\mathbb F$-optional sigma-field
	$\mathcal O(\mathbb F)$, as introduced
	in Section~\ref{sec:preliminaries}.
	
	Then the following equivalence holds
	for every nonnegative $\mathbb F$-optional
	process $H$:
	\begin{equation*}\label{eq:optional-full-contact}
		\begin{aligned}
			&\int_{(0,T]}
			H_t\mathbf1_{\{t<\tau\}}\,da_t=0
			\quad\mathrm{a.s.}
			\quad\Longleftrightarrow\quad
			\int_{(0,T]}H_t\,da_t=0
			\quad\mathrm{a.s.},
		\end{aligned}
	\end{equation*}
	if and only if
	\begin{equation*}\label{eq:optional-sharp-positivity}
		G>0
		\qquad m_a^o\text{-almost everywhere}.
	\end{equation*}
\end{theorem}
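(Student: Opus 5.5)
The proof has three parts: the equivalence \eqref{optionalEq}, the identification of $\nu$, and the sharp criterion.

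\textbf{Step 1: the equivalence \eqref{optionalEq}.} This follows immediately from Remark~\ref{remark-zero-contact}. Since $H\ge0$ is $\mathbb F$-optional, the defining property of the dual optional projection extends to $H$ by monotone approximation with $H\wedge n$. It gives
\[
\mathbb E\int_{(0,T]}H_t\,\mu_a^<(dt)=\mathbb E\int_{(0,T]}H_t\,\nu(dt)\in[0,\infty].
\]
Lemma~\ref{zero} then applies to these two nonnegative random variables.

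\textbf{Step 2: identifying $\nu(dt)=G_t\,da_t$.} The key fact is that for an $\mathbb F$-optional increasing (hence adapted, right-continuous) integrable process $a$ and any measurable bounded process $X$,
\[
\mathbb E\int X_t\,da_t=\mathbb E\int {}^{o}X_t\,da_t .
\]
This is the classical characterization of optional increasing processes (Dellacherie--Meyer). Apply it with $X_t=H_t\mathbf 1_{\{t<\tau\}}$ for bounded nonnegative optional $H$. Since $H$ is optional, ${}^{o}X=H\,{}^{o}(\mathbf 1_{[0,\tau)})=HG$, using the definition \eqref{AZEMA} of $G$ as the optional projection of $\mathbf 1_{\{t<\tau\}}$. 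Two points need care here:
\begin{itemize}[label={}]
\item the right-continuous version of $G$ coincides with the optional projection up to evanescence, which is what allows the identification on all of $\Omega\times(0,T]$ (atoms of $da$ included);
\item $G\,da$ is an $\mathbb F$-optional random measure with $\mathbb E\int G\,da\le\mathbb E a_T<\infty$.
\end{itemize}
Uniqueness of the dual optional projection then gives $\nu=G\,da$. Combined with Step 1, this yields \eqref{eq:optional-weighted-contact}.

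\textbf{Step 3: the sharp criterion.} By Step 2, the left-hand contact condition is equivalent to $HG=0$ $m_a^o$-a.e., and the right-hand one to $H=0$ $m_a^o$-a.e., via the mean-measure zero principle of Section~\ref{sec:preliminaries}.

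\emph{Sufficiency.} If $G>0$ $m_a^o$-a.e., then $HG=0$ a.e. iff $H=0$ a.e. The reverse implication, unstopped contact implying stopped contact, always holds since $\mathbf 1_{\{t<\tau\}}\le1$.

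\emph{Necessity.} Take the test process $H=\mathbf 1_{\{G=0\}}$, which is optional and nonnegative. Then $HG\equiv0$, so the stopped contact condition holds trivially. The assumed equivalence forces $\int\mathbf 1_{\{G=0\}}\,da=0$ a.s., that is, $m_a^o(\{G=0\})=0$. Since $G\ge0$, this is exactly $G>0$ $m_a^o$-a.e.

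\textbf{Main obstacle.} The delicate step is Step 2, specifically its behaviour at jumps of $a$. We must ensure that the version of $G$ used is exactly the optional projection at all times, including the jump times of $a$ where $da$ has atoms. The right-continuous modification of the Azéma supermartingale is indistinguishable from the optional projection of $\mathbf 1_{[0,\tau)}$: both are optional and agree at every stopping time, by the optional section theorem. I would state this explicitly before invoking the Dellacherie--Meyer identity. The rest is bookkeeping with monotone convergence.
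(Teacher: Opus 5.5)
Your proposal is correct and follows the paper's proof essentially step for step. It uses the monotone extension of the dual-projection identity together with Lemma~\ref{zero}; then the identity $\mathbb E\int X\,da=\mathbb E\int {}^{o}X\,da$ for optional $a$, applied with $X=H\mathbf 1_{\{t<\tau\}}$ and followed by uniqueness of the dual optional projection; and finally the test process $\mathbf 1_{\{G=0\}}$ for necessity. You also remark that the right-continuous Az\'ema supermartingale is indistinguishable from ${}^{o}(\mathbf 1_{[0,\tau)})$, so that the identification holds on the atoms of $da$; the paper takes this for granted by defining $G$ as that optional projection.
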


\begin{proof}
	
	We divide the proof into three steps.
	
	\medskip
	\noindent
	\textit{Step 1: Dual optional projection.}
	
	By the defining property of the dual optional
	projection, for every bounded nonnegative
	$\mathbb F$-optional process $H$,
	\[
	\mathbb E\int_{(0,T]}H_t\,\mu_a^<(dt)
	=
	\mathbb E\int_{(0,T]}H_t\,\nu(dt).
	\]
	
	For a general nonnegative optional process $H$,
	define
	\[
	H_t^{(n)}=H_t\wedge n,
	\qquad n\geq1.
	\]
	
	Since $H^{(n)}$ is bounded, nonnegative and
	optional, the preceding identity gives
	\[
	\mathbb E\int_{(0,T]}
	H_t^{(n)}\,\mu_a^<(dt)
	=
	\mathbb E\int_{(0,T]}
	H_t^{(n)}\,\nu(dt).
	\]
	
	As $n\to\infty$,
	$
	H_t^{(n)}\uparrow H_t.
	$
	The monotone convergence theorem therefore yields
	\[
	\mathbb E\int_{(0,T]}H_t\,\mu_a^<(dt)
	=
	\mathbb E\int_{(0,T]}H_t\,\nu(dt).
	\]
	
	Both integrals are nonnegative random variables.
	Consequently, Lemma~\ref{zero} implies
	\[
	\int_{(0,T]}H_t\,\mu_a^<(dt)=0
	\quad\mathrm{a.s.}
	\]
	if and only if
	\[
	\int_{(0,T]}H_t\,\nu(dt)=0
	\quad\mathrm{a.s.}
	\]
	
	This establishes \eqref{optionalEq}.
	
	\medskip
	\noindent
	\textit{Step 2: Identification of the projected measure.}
	
	Since $a$ is an $\mathbb F$-optional increasing
	process, the optional-projection integration
	theorem for integration against an adapted
	optional increasing process applies to its
	associated Stieltjes measure (with the
	optional Dol'eans measure as reference).
	
	For every bounded nonnegative
	$\mathbb F$-optional process $H$, it gives
	\[
	\begin{aligned}
		&\mathbb E\int_{(0,T]}
		H_t\mathbf1_{\{t<\tau\}}\,da_t
		=
		\mathbb E\int_{(0,T]}
		H_t\,{}^{o,\mathbb F}
		\bigl(\mathbf1_{\{t<\tau\}}\bigr)\,da_t.
	\end{aligned}
	\]
	
	By the definition of the Az\'ema
	supermartingale,
	$
	{}^{o,\mathbb F}
	\bigl(\mathbf1_{\{t<\tau\}}\bigr)
	=
	G_t.
	$
	
	Hence
	\begin{equation}\label{eq:optional-projection-identity}
		\mathbb E\int_{(0,T]}
		H_t\mathbf1_{\{t<\tau\}}\,da_t
		=
		\mathbb E\int_{(0,T]}H_tG_t\,da_t.
	\end{equation}
	
	On the other hand, the definition of $\nu$
	gives
	\[
	\mathbb E\int_{(0,T]}
	H_t\mathbf1_{\{t<\tau\}}\,da_t
	=
	\mathbb E\int_{(0,T]}H_t\,\nu(dt).
	\]
	
	Combining these identities, we obtain
	\[
	\mathbb E\int_{(0,T]}H_t\,\nu(dt)
	=
	\mathbb E\int_{(0,T]}H_tG_t\,da_t.
	\]
	
	Since this holds for every bounded nonnegative
	optional test process, uniqueness of the dual
	optional projection yields
	$
	\nu(dt)=G_t\,da_t
	$
	in the sense of optional random measures.
	
	Monotone approximation extends
	\eqref{eq:optional-projection-identity}
	to arbitrary nonnegative optional $H$.
	
	Applying Lemma~\ref{zero} once again proves
	\eqref{eq:optional-weighted-contact}.
	
	\medskip
	\noindent
	\textit{Step 3: Necessary and sufficient
		positivity condition.}
	
	Suppose first that
	\[
	G>0
	\qquad m_a^o\text{-a.e.}
	\]
	
	Assume
	\[
	\int_{(0,T]}
	H_t\mathbf1_{\{t<\tau\}}\,da_t=0
	\quad\mathrm{a.s.}
	\]
	
	By \eqref{eq:optional-projection-identity},
	\[
	\mathbb E\int_{(0,T]}H_tG_t\,da_t=0.
	\]
	
	Equivalently,
	\[
	\int_{\Omega\times(0,T]}
	HG\,dm_a^o=0.
	\]
	
	Since $HG\geq0$, it follows that
	\[
	HG=0
	\qquad m_a^o\text{-a.e.}
	\]
	
	The strict positivity of $G$ therefore implies
	\[
	H=0
	\qquad m_a^o\text{-a.e.}
	\]
	
	Consequently,
	\[
	\mathbb E\int_{(0,T]}H_t\,da_t=0,
	\]
	and hence
	\[
	\int_{(0,T]}H_t\,da_t=0
	\quad\mathrm{a.s.}
	\]
	
	The reverse implication follows immediately from
	\[
	0\leq
	\int_{(0,T]}
	H_t\mathbf1_{\{t<\tau\}}\,da_t
	\leq
	\int_{(0,T]}H_t\,da_t.
	\]
	
	To prove necessity, suppose that
	$
	m_a^o(\{G=0\})>0.
$
	Consider the nonnegative optional process
	$
	H_t=\mathbf1_{\{G_t=0\}}.
$
	Then
	$
	H_tG_t=0
	$
	everywhere, so
	$$
	\int_{(0,T]}
	H_t\mathbf1_{\{t<\tau\}}\,da_t=0
	\quad\mathrm{a.s.}
	$$
	However,
	\[
	\mathbb E\int_{(0,T]}H_t\,da_t
	=
	m_a^o(\{G=0\})>0.
	\]
	Thus the unstopped integral does not vanish
	almost surely.
	Therefore, universal equivalence of the
	stopped and unstopped contact conditions
	requires
	\[
	G>0
	\qquad m_a^o\text{-a.e.}
	\]
	
	The proof is complete.
	
\end{proof}

\begin{remark}[Support of the reflection measure]
	\label{rem:optional-support}
	
	The positivity condition
	\[
	G>0
	\qquad m_a^o\text{-a.e.}
	\]
	is a condition relative to the mean measure
	induced by $da$, not to Lebesgue measure.
	
	In particular, positivity of $G$ for
	Lebesgue-almost every time does not suffice
	when $da$ has an atomic or singular component. The criterion means that the reflection measure
	does not charge, in the mean-measure sense,
	any optional region on which the conditional
	probability of survival is zero.
	
\end{remark}

\subsection{Predictable contact conditions}

We now consider the case in which the increasing
process $a$ is $\mathbb F$-predictable.

This distinction is particularly relevant
for reflected BSDEs with irregular obstacles,
where the predictable increasing component
satisfies a contact condition involving
predictable integrands.

Let
$
q_t
=
{}^{p,\mathbb F}
\bigl(\mathbf1_{\{t\leq\tau\}}\bigr)
$
be the $\mathbb F$-predictable projection
of the non-strict survival indicator. In terms of the Az\'ema supermartingale \eqref{AZEMA},
the predictable projection admits the
representation
\[
q_t=G_{t-},
\qquad t>0,
\]
with the appropriate predictable version.

We retain the notation $q$ to emphasize
the predictable nature of the projection
and the endpoint convention.

\begin{theorem}[Predictable contact: sharp survival criterion]
	\label{predictable}
	Suppose, in addition, that $a$ is
	$\mathbb F$-predictable. Let $H\geq0$
	be an $\mathbb F$-predictable process.
	
	Then
	\begin{equation}\label{predEq}
		\mathbb E\int_{(0,T]}
		H_t\mathbf1_{\{t\leq\tau\}}\,da_t
		=
		\mathbb E\int_{(0,T]}H_tq_t\,da_t.
	\end{equation}
	
	Consequently,
	\begin{equation}\label{eq:predictable-contact-equivalence}
		\begin{aligned}
			&\int_{(0,T]}
			H_t\mathbf1_{\{t\leq\tau\}}\,da_t=0
			\quad\mathrm{a.s.}
			\quad\Longleftrightarrow\quad
			\int_{(0,T]}H_tq_t\,da_t=0
			\quad\mathrm{a.s.}
		\end{aligned}
	\end{equation}
	
	Moreover, the stopped predictable contact
	condition is equivalent to
	\[
	\int_{(0,T]}H_t\,da_t=0
	\quad\mathrm{a.s.}
	\]
	for every nonnegative $\mathbb F$-predictable
	process $H$ if and only if
	\begin{equation*}\label{eq:predictable-sharp-positivity}
		q>0
		\qquad
		m_a^p\text{-almost everywhere},
	\end{equation*}
	where $m_a^p$ is the mean measure associated
	with $da$ on $\mathcal P(\mathbb F)$,
	as introduced in
	Section~\ref{sec:preliminaries}.
	
\end{theorem}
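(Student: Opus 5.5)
The argument runs parallel to the proof of Theorem~\ref{optional}, with optional objects replaced by predictable ones and the strict survival indicator replaced by $\mathbf1_{\{t\leq\tau\}}$.

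\emph{Step 1: the identity \eqref{predEq}.} I first take $H$ bounded, nonnegative and $\mathbb F$-predictable. Since $a$ is $\mathbb F$-predictable, integrable and increasing, I invoke the predictable-projection integration theorem: for any bounded measurable process $X$,
\[
\mathbb E\int_{(0,T]} X_t\,da_t=\mathbb E\int_{(0,T]} {}^{p,\mathbb F}\!X_t\,da_t .
\]
This is the defining property of the predictable Dol\'eans measure of $a$. I apply it with $X_t=H_t\mathbf1_{\{t\leq\tau\}}$. Because $H$ is predictable it factors out of the projection, so ${}^{p,\mathbb F}(H\mathbf1_{\{\cdot\leq\tau\}})=H\,q$, and this gives \eqref{predEq} for bounded $H$. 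Monotone convergence along $H\wedge n\uparrow H$ extends it to every nonnegative predictable $H$. One measurability point must be checked here: $\mathbf1_{\{t\leq\tau\}}$ is only $\mathcal A\otimes\mathcal B$-measurable, so the projection theorem must be used in its form for bounded measurable processes. This is standard (Dellacherie--Meyer), and I will cite it rather than reprove it.

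\emph{Step 2: the equivalence \eqref{eq:predictable-contact-equivalence}.} Both $\int H_t\mathbf1_{\{t\leq\tau\}}\,da_t$ and $\int H_tq_t\,da_t$ are nonnegative and have equal expectations by Step 1. Lemma~\ref{zero} therefore gives the equivalence immediately. As in Remark~\ref{remark-zero-contact}, this says nothing about pathwise equality of the two integrals.

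\emph{Step 3: the sharp criterion.} For sufficiency, assume $q>0$ $m_a^p$-a.e. and that the stopped integral vanishes a.s. Step 1 gives $\int Hq\,dm_a^p=0$, so $Hq=0$ $m_a^p$-a.e. Positivity of $q$ then forces $H=0$ $m_a^p$-a.e., hence $\mathbb E\int H\,da=0$ and the unstopped integral vanishes a.s. The converse implication is trivial, because $\mathbf1_{\{t\leq\tau\}}\leq1$. For necessity, suppose $m_a^p(\{q=0\})>0$ and take the predictable test process $H=\mathbf1_{\{q=0\}}$, which is predictable since $q$ is. Then $Hq\equiv0$, so by Step 2 the stopped integral vanishes a.s., while $\mathbb E\int H\,da=m_a^p(\{q=0\})>0$. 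The equivalence therefore fails for this $H$.

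\emph{Main obstacle.} The delicate part is Step 1, namely justifying the projection identity with the \emph{non-strict} indicator and checking that it yields exactly $q$, the predictable projection, rather than the optional $G$. The predictability of $a$ is essential at this point. For merely optional $a$, an atom of $da$ sitting at the time $\tau$ would break the identity, and this is precisely the endpoint effect in \eqref{eq:boundary-difference}. The representation $q=G_{-}$ is not needed for the proof. I will use only the definition of $q$ as a predictable projection, which keeps the argument free of version issues.
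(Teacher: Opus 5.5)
Your proposal is correct and follows essentially the same route as the paper. Both arguments use the predictable-projection integration theorem for the predictable increasing process $a$, then ${}^{p,\mathbb F}(H\mathbf 1_{\{\cdot\le\tau\}})=Hq$ with monotone convergence, then Lemma~\ref{zero}, and finally the test process $\mathbf 1_{\{q=0\}}$ for necessity. You also note explicitly that the projection theorem must be used in its form for bounded $\mathcal A\otimes\mathcal B$-measurable processes, since $\mathbf 1_{\{t\le\tau\}}$ is not $\mathbb F$-adapted; this is a worthwhile precision that the paper leaves implicit.
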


\begin{proof}
	
	We first establish the expectation identity
	\eqref{predEq}.
	
	Since $a$ is an $\mathbb F$-predictable
	increasing process, the predictable-projection
	integration theorem gives, for every bounded
	nonnegative predictable process $H$,
	\[
	\begin{aligned}
		&\mathbb E\int_{(0,T]}
		H_t\mathbf1_{\{t\leq\tau\}}\,da_t
		=
		\mathbb E\int_{(0,T]}
		{}^{p,\mathbb F}
		\left(
		H_t\mathbf1_{\{t\leq\tau\}}
		\right)\,da_t.
	\end{aligned}
	\]
	
	Because $H$ is predictable,
	the multiplication property of predictable
	projection yields
	$
	{}^{p,\mathbb F}
	\left(
	H_t\mathbf1_{\{t\leq\tau\}}
	\right)
	=
	H_tq_t.
	$
	Therefore,
	\[
	\mathbb E\int_{(0,T]}
	H_t\mathbf1_{\{t\leq\tau\}}\,da_t
	=
	\mathbb E\int_{(0,T]}H_tq_t\,da_t.
	\]
	
	For an arbitrary nonnegative predictable $H$,
	apply this identity to
	$
	H^{(n)}=H\wedge n
	$
	and let $n\to\infty$.	The monotone convergence theorem proves
	\eqref{predEq}.
	Since both integrals in \eqref{predEq}
	are nonnegative random variables,
	Lemma~\ref{zero} gives
	\eqref{eq:predictable-contact-equivalence}.
	
	We now prove the positivity criterion.
	
	Suppose first that
	\[
	q>0
	\qquad m_a^p\text{-a.e.}
	\]
	
	If the stopped contact condition holds, then
	\[
	\mathbb E\int_{(0,T]}H_tq_t\,da_t=0.
	\]
	
	By the definition of the mean measure,
	\[
	\int_{\Omega\times(0,T]}
	Hq\,dm_a^p=0.
	\]
	
	Since $Hq\geq0$, this implies
	\[
	Hq=0
	\qquad m_a^p\text{-a.e.}
	\]
	
	Using the strict positivity of $q$,
	we conclude that
	\[
	H=0
	\qquad m_a^p\text{-a.e.}
	\]
	
	Thus
	\[
	\mathbb E\int_{(0,T]}H_t\,da_t=0,
	\]
	and consequently
	\[
	\int_{(0,T]}H_t\,da_t=0
	\quad\mathrm{a.s.}
	\]
	
	Conversely, if
	\[
	\int_{(0,T]}H_t\,da_t=0
	\quad\mathrm{a.s.},
	\]
	then, by nonnegativity,
	\[
	\int_{(0,T]}
	H_t\mathbf1_{\{t\leq\tau\}}\,da_t=0
	\quad\mathrm{a.s.}
	\]
	
	This proves sufficiency.
	
	For necessity, suppose that
	\[
	m_a^p(\{q=0\})>0.
	\]
	
	Choose
	\[
	H_t=\mathbf1_{\{q_t=0\}}.
	\]
	
	This process is nonnegative and predictable.
	Moreover,
	$
	H_tq_t=0.
	$
	
	By \eqref{predEq},
	\[
	\mathbb E\int_{(0,T]}
	H_t\mathbf1_{\{t\leq\tau\}}\,da_t=0,
	\]
	and hence
	\[
	\int_{(0,T]}
	H_t\mathbf1_{\{t\leq\tau\}}\,da_t=0
	\quad\mathrm{a.s.}
	\]
	
	On the other hand,
	\[
	\mathbb E\int_{(0,T]}H_t\,da_t
	=
	m_a^p(\{q=0\})>0.
	\]
	
	Therefore, the unstopped contact condition
	does not hold almost surely.
	
	The equivalence for every nonnegative
	predictable $H$ is thus possible only if
	\[
	q>0
	\qquad m_a^p\text{-a.e.}
	\]
	
	This completes the proof.
	
\end{proof}

\begin{remark}[Strict and non-strict survival]
	\label{rem:boundary-survival}
	
	The distinction between
	$\mathbf1_{\{t<\tau\}}$ and
	$\mathbf1_{\{t\leq\tau\}}$
	is essential when the random time $\tau$
	has atoms.
	
	Writing
	$
	G_t
	=
	\mathbb P(\tau>t\mid\mathcal F_t),
	$
	the predictable projection of the
	non-strict survival indicator is
	\[
	{}^{p,\mathbb F}
	\bigl(\mathbf1_{\{t\leq\tau\}}\bigr)
	=
	G_{t-},
	\qquad t>0.
	\]
	
	In contrast, the predictable projection of
	$\mathbf1_{\{t<\tau\}}$ need not coincide
	with $G_{t-}$.
	
	At a predictable time $S$, the difference
	between the two predictable projections
	is determined by the conditional mass
	$
	\mathbb P(\tau=S\mid\mathcal F_{S-})
	$
	on $\{S<\infty\}$.
	
	Consequently, replacing a strict survival
	indicator by its non-strict counterpart
	may change the predictable contact condition
	when the reflection measure charges the
	graph of a predictable time.
	
\end{remark}

\subsection{Examples and interpretation}

The following examples illustrate why
survival positivity and endpoint conventions
are essential.

\begin{example}[An atom at the random horizon]
	\label{ex:boundary-atom}
	
	Let $T=2$, let $\tau=1$ deterministically,
	and consider
	$
	a_t=\mathbf1_{\{t\geq1\}}.
	$
	
	The associated Stieltjes measure is
	$
	da_t=\delta_1(dt)
	$ (see Table \ref{tab:stieltjes-types}).
	
	For $H_t\equiv1$, we have
	\[
	\int_{(0,2]}
	H_t\mathbf1_{\{t<\tau\}}\,da_t=0,
	\]
	whereas
	\[
	\int_{(0,2]}
	H_t\mathbf1_{\{t\leq\tau\}}\,da_t=1.
	\]
	Thus, the two stopped random measures
	assign different masses to the graph of
	the random horizon.
	This example shows that the distinction
	between strict and non-strict stopping
	cannot be ignored for atomic reflection
	measures.
	
\end{example}

\begin{example}[Failure of contact transfer
	without survival positivity]
	\label{ex:contact-obstruction}
	
	Let $T=2$, let $\tau=1$ deterministically,
	and define
	\[
	a_t=(t-1)^+,
	\qquad
	H_t\equiv1.
	\]
	
	Then
	\[
	da_t=\mathbf1_{\{t>1\}}\,dt.
	\]
	
	Since $da$ is supported on $(1,2]$,
	we obtain
	\[
	\int_{(0,2]}
	H_t\mathbf1_{\{t<\tau\}}\,da_t=0.
	\]
	
	However,
	\[
	\int_{(0,2]}H_t\,da_t=1.
	\]
	
	The reference filtration may be taken
	to be trivial. In that case,
	\[
	G_t=\mathbf1_{\{t<1\}},
	\]
	which vanishes on the support of $da$.
	
	Thus the stopped contact condition holds,
	while the unstopped contact condition fails.
	
	The example demonstrates that the
	positivity condition must be imposed
	relative to the reflection measure.	
\end{example}
The Example \ref{ex:contact-obstruction} is intended to illustrate the
measure-theoretic obstruction only;
it is not a counterexample to any
specific invariance-time hypothesis.

\begin{remark}[Implications for reflected BSDEs]
	\label{rem:contact-rbsde}
	
	In a reflected BSDE, the increasing process
	$(a_t)$ may represent a reflection component,
	while $H$ represents a nonnegative gap
	between the solution and its obstacle.
	
	The unstopped minimality condition typically
	takes the form
	\[
	\int_{(0,T]}H_t\,da_t=0.
	\]
	
	After termination at the random horizon
	$\tau$, the corresponding condition
	may involve either
	\[
	\int_{(0,T]}
	H_t\mathbf1_{\{t<\tau\}}\,da_t=0
	\]
	or
	\[
	\int_{(0,T]}
	H_t\mathbf1_{\{t\leq\tau\}}\,da_t=0,
	\]
	depending on the reflection component
	and the adopted endpoint convention.
	
	The preceding theorems identify the exact
	survival-support conditions under which
	these stopped contact identities are
	equivalent to their unstopped counterparts.	Importantly, these results concern only
	the transfer of contact conditions.
	They do not, by themselves, establish
	the reduction or reconstruction of a
	reflected BSDE.
	Such a reconstruction additionally requires
	compatible martingale representations,
	driver transformations, terminal conditions,
	and obstacle reductions.
	
\end{remark}

\section{Cox construction: strong survival equivalence}
\label{sec:cox-survival}

We now specialize the preceding contact-transfer
criteria to a Cox construction. In this setting,
the conditional survival process is explicit
and strictly positive on the finite horizon.
This allows us to establish a strong equivalence
between stopped and unstopped contact conditions,
without imposing absolute continuity of the
underlying reflection measure.

\subsection{Cox random time and conditional survival}
\label{Boba}
Assume that the probability space carries a
nonnegative, continuous, nondecreasing,
$\mathbb F$-adapted process
$\Lambda=(\Lambda_t)_{t\geq0}$ satisfying
\[
\Lambda_0=0,
\qquad
\Lambda_T<\infty
\quad\mathrm{a.s.}
\]

Let $E$ be a unit exponential random variable ($E \sim \text{Exp}(1)$),
independent of $\mathcal F_\infty$, and define
the Cox random time
\begin{equation}\label{eq:cox-time}
	\tau
	=
	\inf\{t\geq0:\Lambda_t\geq E\},
\end{equation}
with the convention $\inf\varnothing=\infty$.

The process $\Lambda$ is called the \textit{cumulative
hazard process}. When it is absolutely continuous,
it admits a representation
\[
\Lambda_t=\int_0^t\lambda_s\,ds,
\]
where $\lambda$ is a nonnegative
$\mathbb F$-progressively measurable process,
called the\textit{ hazard-rate} or \textit{intensity process}.

Define the survival process
\begin{equation*}\label{eq:cox-survival}
	S_t=e^{-\Lambda_t},
	\qquad 0\leq t\leq T.
\end{equation*}

By continuity and monotonicity of $\Lambda$,
\[
\{\tau>t\}=\{E>\Lambda_t\}.
\]

Since $E$ is independent of
$\mathcal F_\infty$ and has the unit
exponential distribution,
\[
\begin{aligned}
	\mathbb P(\tau>t\mid\mathcal F_\infty)
	&=
	\mathbb P(E>\Lambda_t\mid\mathcal F_\infty)
	=e^{-\Lambda_t}=S_t.
\end{aligned}
\]

Furthermore, since $\Lambda_t$ is
$\mathcal F_t$-measurable, the tower property
gives
\begin{equation*}\label{eq:cox-azema}
	G_t
	=
	\mathbb P(\tau>t\mid\mathcal F_t)
	=
	S_t=e^{-\Lambda_t}.
\end{equation*}

Thus, the Az\'ema supermartingale associated
with $\tau$ is explicitly identified.

In particular, the finiteness of $\Lambda_T$
implies
\begin{equation}\label{eq:cox-positive-survival}
	S_t>0,
	\qquad 0\leq t\leq T,
	\quad\mathrm{a.s.}
\end{equation}

This strict positivity \eqref{eq:cox-positive-survival} is the key property
underlying the equivalence of contact conditions.

\begin{remark}[Financial interpretation]
	\label{rem:cox-financial}
	
	In reduced-form credit-risk modeling,
	$\tau$ may represent the default time
	of a counterparty, while $\Lambda_t$
	describes the cumulative default hazard.
	
	The process
	$
	S_t=e^{-\Lambda_t}=G_t
	$
	represents the probability of survival
	beyond time $t$, conditional on the
	reference information.
	
	In the particular case of a constant
	intensity $\lambda>0$, using \eqref{eq:cox-time}, we get
	\[
	\Lambda_t=\lambda t,
	\qquad
	\tau=\frac{E}{\lambda},
	\]
	and hence
	\[
	\mathbb P(\tau>t)=e^{-\lambda t}.
	\]
	
	Thus, $\tau$ follows an exponential
	distribution with parameter $\lambda$.
	
	The Cox construction provides a tractable
	model in which default information can be
	introduced through progressive enlargement,
	while the reference filtration retains
	the dynamics of the underlying market factors.
	
\end{remark}

\subsection{Conditional survival identity for random measures}

The following result is the central
measure-theoretic identity of this section.

Its significance is that it applies to
arbitrary integrable Stieltjes measures
associated with the reference filtration,
including measures with atoms or singular
continuous components.

\begin{lemma}[Conditional Fubini for Cox survival]
	\label{coxlemma}
	
	Let $H\geq0$ be an $\mathbb F$-optional
	process, and let $a$ satisfy the standing
	assumptions of Section~\ref{sec:preliminaries}.
	
	Then
	\begin{equation}\label{coxidentity}
		\mathbb E\int_{(0,T]}
		H_t\mathbf1_{\{t<\tau\}}\,da_t
		=
		\mathbb E\int_{(0,T]}H_tS_t\,da_t.
	\end{equation}
	
	Moreover, the same identity holds for the
	non-strict survival indicator:
	\begin{equation}\label{eq:cox-nonstrict-identity}
		\mathbb E\int_{(0,T]}
		H_t\mathbf1_{\{t\leq\tau\}}\,da_t
		=
		\mathbb E\int_{(0,T]}H_tS_t\,da_t.
	\end{equation}
	
	Consequently,
	\begin{equation}\label{eq:cox-endpoint-equivalence}
		\int_{(0,T]}
		H_t\mathbf1_{\{t<\tau\}}\,da_t
		=
		\int_{(0,T]}
		H_t\mathbf1_{\{t\leq\tau\}}\,da_t
		\quad\mathrm{a.s.}
	\end{equation}
	
\end{lemma}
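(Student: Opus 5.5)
Conditioning on $\mathcal F_\infty$ turns both identities into Fubini computations. Everything in the integrands except the indicator is $\mathcal F_\infty$-measurable: $H$ and $a$ are $\mathbb F$-adapted, so $H_t$, $da_t$ and the whole path $t\mapsto\Lambda_t$ are $\mathcal F_\infty$-measurable. The only randomness outside $\mathcal F_\infty$ is the exponential variable $E$, which is independent of $\mathcal F_\infty$.

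The first step is to rewrite the indicators in terms of $E$. By continuity and monotonicity of $\Lambda$ we already have $\{t<\tau\}=\{E>\Lambda_t\}$. For the non-strict indicator I would show
\[
\{t\le\tau\}\subseteq\{E\ge\Lambda_t\}.
\]
Indeed, if $\Lambda_t>E$, then continuity gives some $s<t$ with $\Lambda_s\ge E$, so $\tau<t$. Since $\{E>\Lambda_t\}\subseteq\{t\le\tau\}$, the set $\{t\le\tau\}$ lies between $\{E>\Lambda_t\}$ and $\{E\ge\Lambda_t\}$, and these two differ only on $\{E=\Lambda_t\}$.

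Next I apply Tonelli inside the conditional expectation. For the strict case,
\[
\mathbb E\Bigl[\int_{(0,T]}H_t\mathbf1_{\{E>\Lambda_t\}}\,da_t\,\Big|\,\mathcal F_\infty\Bigr]
=\int_{(0,T]}H_t\,\mathbb P(E>\Lambda_t\mid\mathcal F_\infty)\,da_t
=\int_{(0,T]}H_tS_t\,da_t .
\]
To justify this rigorously I would write $\mathbb E[\,\cdot\,]$ as an iterated integral over $\Omega\times\mathbb R_+$, using independence to write the joint law of $(\omega,E)$ as $\mathbb P|_{\mathcal F_\infty}\otimes e^{-x}dx$ on the $\mathcal F_\infty$-part. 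The map $(\omega,t,x)\mapsto H_t(\omega)\mathbf1_{\{x>\Lambda_t(\omega)\}}$ is jointly measurable, since $H$ is optional and $\Lambda$ is continuous. For fixed $\omega$, Fubini in $(t,x)$ then gives $\int H_t e^{-\Lambda_t}\,da_t$. Taking expectations yields \eqref{coxidentity}.

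For \eqref{eq:cox-nonstrict-identity} I would run the same computation with the upper bound $\mathbf1_{\{E\ge\Lambda_t\}}$. Because $\mathbb P(E=\Lambda_t\mid\mathcal F_\infty)=0$ (the exponential law has no atoms), the two bounds give the same value. The more robust way to phrase this, and the step I expect to need the most care, is to show that the difference
\[
\int_{(0,T]}H_t\bigl(\mathbf1_{\{t\le\tau\}}-\mathbf1_{\{t<\tau\}}\bigr)\,da_t
\]
vanishes almost surely, since it also gives \eqref{eq:cox-endpoint-equivalence} directly. By \eqref{eq:boundary-difference} this difference equals $H_\tau\Delta a_\tau\mathbf1_{\{0<\tau\le T\}}$. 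It is dominated by
\[
\int_{(0,T]}H_t\mathbf1_{\{E=\Lambda_t\}}\,da_t,
\]
because $\tau$ lies in the random set $\{t:\Lambda_t=E\}$. Conditionally on $\mathcal F_\infty$, the expectation of this dominating integral is
\[
\int_{(0,T]}H_t\,\mathbb P(E=\Lambda_t\mid\mathcal F_\infty)\,da_t=0 .
\]
So the difference is a nonnegative variable with zero expectation and hence vanishes almost surely. If $H$ is not integrable, I would first replace $H$ by $H\wedge n$ and let $n\to\infty$, using nonnegativity. The essential point is that the atoms of $a$ sit at $\mathcal F_\infty$-measurable times, which $\tau$ charges with probability zero because the law of $E$ is diffuse. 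This gives \eqref{eq:cox-nonstrict-identity} and \eqref{eq:cox-endpoint-equivalence} together.
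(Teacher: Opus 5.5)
Your proposal is correct and follows essentially the same route as the paper: you condition on $\mathcal F_\infty$, apply conditional Tonelli using the independence of $E$, and use that the exponential law is atomless, so that $\mathbb P(E=\Lambda_t\mid\mathcal F_\infty)=0$. Your one refinement is a small improvement: you dominate the boundary term $H_\tau\Delta a_\tau\mathbf 1_{\{0<\tau\le T\}}$ by $\int_{(0,T]}H_t\mathbf 1_{\{E=\Lambda_t\}}\,da_t$, whereas the paper's last step subtracts the two expectations and so tacitly assumes they are finite (with your domination argument, even your truncation $H\wedge n$ becomes unnecessary).
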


\begin{proof}
	
	We first establish the strict survival identity
	\eqref{coxidentity}.
	
	Since $H$, $a$, and $\Lambda$ are
	$\mathbb F$-optional or adapted,
	their restrictions to $[0,T]$ are measurable
	with respect to the reference information
	$\mathcal F_\infty$.
	
	Conditionally on $\mathcal F_\infty$,
	the paths of $H$, $a$, and $\Lambda$ are fixed,
	whereas the exponential random variable $E$
	remains independent.
	
	By the definition of the Cox time, we have
	$
	\mathbf1_{\{t<\tau\}}
	=
	\mathbf1_{\{E>\Lambda_t\}}.
	$
	
	For every fixed $t\in[0,T]$,
	independence yields
	\[
	\begin{aligned}
		\mathbb E\left[
		\mathbf1_{\{E>\Lambda_t\}}
		\mid\mathcal F_\infty
		\right]
		&=
		\mathbb P(E>\Lambda_t\mid\mathcal F_\infty)
		=e^{-\Lambda_t}=S_t.
	\end{aligned}
	\]
	
	Because $H\geq0$ and $da$ is a positive
	random measure, conditional Tonelli's theorem
	allows us to interchange conditional expectation
	and integration with respect to the fixed
	Stieltjes measure.
	
	Consequently,
	\[
	\begin{aligned}
		\mathbb E\left[
		\left.
		\int_{(0,T]}
		H_t\mathbf1_{\{t<\tau\}}\,da_t
		\right|\mathcal F_\infty
		\right]
		=
		\int_{(0,T]}H_t
		\mathbb E\left[
		\mathbf1_{\{E>\Lambda_t\}}
		\mid\mathcal F_\infty
		\right]da_t
		&=
		\int_{(0,T]}H_te^{-\Lambda_t}\,da_t
		\\
		&\quad=
		\int_{(0,T]}H_tS_t\,da_t.
	\end{aligned}
	\]
	
	Taking expectations gives
	\[
	\mathbb E\int_{(0,T]}
	H_t\mathbf1_{\{t<\tau\}}\,da_t
	=
	\mathbb E\int_{(0,T]}H_tS_t\,da_t,
	\]
	which proves \eqref{coxidentity}.
	
	We next consider the non-strict survival
	indicator.
	
	By continuity of $\Lambda$,
	the events $\{t\leq\tau\}$ and
	$\{E\geq\Lambda_t\}$ coincide up to the
	possible equality event $\{E=\Lambda_t\}$.
	
	Since the exponential distribution has
	no atoms,
	$
	\mathbb P(E=\Lambda_t\mid\mathcal F_\infty)
	=0
	$
	for every fixed $t$.
	
	The same conditional Tonelli argument
	therefore gives
	\[
	\mathbb E\int_{(0,T]}
	H_t\mathbf1_{\{t\leq\tau\}}\,da_t
	=
	\mathbb E\int_{(0,T]}H_tS_t\,da_t,
	\]
	proving \eqref{eq:cox-nonstrict-identity}.
	
	Finally, the difference between the two
	stopped integrals is, by the boundary
	identity established in \eqref{eq:boundary-difference},
	\[
	\begin{aligned}
		&\int_{(0,T]}
		H_t\mathbf1_{\{t\leq\tau\}}\,da_t
		=	\int_{(0,T]}
		H_t\mathbf1_{\{t<\tau\}}\,da_t+
		H_\tau\Delta a_\tau
		\mathbf1_{\{0<\tau\leq T\}}.
	\end{aligned}
	\]
	
	The left-hand side is nonnegative, and
	the preceding identities show that
	its expectation is zero.
	
	It follows that
	\[
	H_\tau\Delta a_\tau
	\mathbf1_{\{0<\tau\leq T\}}
	=0
	\quad\mathrm{a.s.}
	\]
	
	Hence the strict and non-strict stopped
	integrals coincide almost surely,
	which proves \eqref{eq:cox-endpoint-equivalence}.
	
\end{proof}

\begin{remark}[Why atoms do not cause a boundary problem]
	\label{rem:cox-atoms}
	
	The preceding result does not require
	continuity of the increasing process $(a_t)$.
	
	Indeed, $(da_t)$ may have atoms at
	$\mathbb F$-stopping times or may possess
	a singular continuous component.	The essential feature is that the Cox
	threshold $E$ is independent of the
	reference information and has a
	continuous distribution.
	
	Conditionally on $\mathcal F_\infty$,
	the Stieltjes measure $da$ and the cumulative
	hazard $\Lambda$ are fixed.
		Thus, even when $(da_t)$ has atoms, the
	probability that the Cox random time
	coincides with an atom of $(da_t)$ is zero.
	More precisely,
	\[
	\Delta a_\tau
	\mathbf1_{\{0<\tau\leq T\}}
	=0
	\quad\mathrm{a.s.}
	\]
	
	This property is specific to the
	continuous-threshold Cox construction
	and need not hold for a general random time.
	
\end{remark}

\subsection{Strong equivalence of contact conditions}

We can now establish the transfer of
contact conditions without imposing
absolute continuity of the reflection
measure.

\begin{corollary}[Cox contact transfer for arbitrary
	Stieltjes reflection measures]
	\label{coxcor}
	
	Under the assumptions of
	Lemma~\ref{coxlemma}, for every nonnegative
	$\mathbb F$-optional process $H$,
	\begin{equation}\label{eq:cox-contact-equivalence}
		\begin{aligned}
			&\int_{(0,T]}
			H_t\mathbf1_{\{t<\tau\}}\,da_t=0
			\quad\mathrm{a.s.}
			\quad\Longleftrightarrow\quad
			\int_{(0,T]}H_t\,da_t=0
			\quad\mathrm{a.s.}
		\end{aligned}
	\end{equation}
	
	The same equivalence holds with
	$\mathbf1_{\{t\leq\tau\}}$ in place of
	$\mathbf1_{\{t<\tau\}}$.
	
	In particular, the result applies to
	$\mathbb F$-predictable increasing
	processes and predictable contact
	integrands, as well as to random
	Stieltjes measures with atomic or
	singular continuous components.
	
\end{corollary}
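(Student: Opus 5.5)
The plan is to combine the Cox survival identity of Lemma~\ref{coxlemma} with the zero-integral principle (Lemma~\ref{zero}) and the strict positivity \eqref{eq:cox-positive-survival} of $S$. This is the argument of Step~3 of Theorem~\ref{optional}, but with the explicit weight $S$ in place of $G$. No dual projection needs to be identified, since \eqref{coxidentity} already gives the expectation identity for every nonnegative $\mathbb F$-optional $H$, with no bound on $H$ required.

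For the implication ``$\Leftarrow$'' I will simply use the pathwise bound
\[
0\le\int_{(0,T]}H_t\mathbf1_{\{t<\tau\}}\,da_t\le\int_{(0,T]}H_t\,da_t,
\]
and the same bound holds with $\mathbf1_{\{t\le\tau\}}$. For ``$\Rightarrow$'', assume the stopped integral vanishes almost surely. Then by \eqref{coxidentity},
\[
\mathbb E\int_{(0,T]}H_tS_t\,da_t=0,
\]
so $\int_{(0,T]}H_tS_t\,da_t=0$ a.s. by nonnegativity. I will conclude pathwise rather than through the mean measure. Fix $\omega$ outside a null set on which $\Lambda_T(\omega)<\infty$ and the weighted integral vanishes. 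Continuity and monotonicity of $\Lambda$ give $S_t(\omega)\ge e^{-\Lambda_T(\omega)}>0$ uniformly on $[0,T]$, hence
\[
\int_{(0,T]}H_t\,da_t\le e^{\Lambda_T}\int_{(0,T]}H_tS_t\,da_t=0 .
\]
(Equivalently, one has $S>0$ everywhere on $\Omega\times(0,T]$ a.s., so $S>0$ holds $m_a^o$-a.e. and Theorem~\ref{optional} applies with $G=S$.)

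For the non-strict indicator I will either repeat the argument using \eqref{eq:cox-nonstrict-identity}, or invoke \eqref{eq:cox-endpoint-equivalence}: the two stopped integrals coincide a.s., so their vanishing conditions are identical.

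For the final sentence of the corollary, I will note that every $\mathbb F$-predictable process is $\mathbb F$-optional, and every predictable increasing process is in particular adapted and right-continuous. Hence predictable data fall within the hypotheses of Lemma~\ref{coxlemma}. Lemma~\ref{coxlemma} also imposes no absolute continuity on $da$, so atoms and singular continuous parts are covered automatically.

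The only delicate point is measure-theoretic: the null set on which $\Lambda_T=\infty$ or the weighted integral is nonzero must be handled uniformly in $t$. This is why I use the pathwise lower bound $e^{-\Lambda_T}$, which is uniform in $t$, rather than pointwise-in-$t$ positivity statements. Everything else is routine.
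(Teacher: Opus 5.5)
Your proposal is correct and follows essentially the same route as the paper: Lemma~\ref{coxlemma} plus nonnegativity gives $\int_{(0,T]}H_tS_t\,da_t=0$ a.s., and the uniform pathwise bound $e^{-\Lambda_T}\int_{(0,T]}H_t\,da_t\le\int_{(0,T]}H_tS_t\,da_t$ then finishes the argument, with the non-strict case handled in the same way (your direct pathwise bound for ``$\Leftarrow$'' is a harmless shortcut). One small slip: right-continuity of a predictable increasing process comes from the paper's standing assumptions, not from predictability itself.
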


\begin{proof}
	
	By Lemma~\ref{coxlemma},
	\[
	\mathbb E\int_{(0,T]}
	H_t\mathbf1_{\{t<\tau\}}\,da_t
	=
	\mathbb E\int_{(0,T]}H_tS_t\,da_t.
	\]
	
	Since both integrals are nonnegative,
	Lemma~\ref{zero} implies
	\[
	\begin{aligned}
		&\int_{(0,T]}
		H_t\mathbf1_{\{t<\tau\}}\,da_t=0
		\quad\mathrm{a.s.}
		&\quad\Longleftrightarrow\qquad
		\int_{(0,T]}H_tS_t\,da_t=0
		\quad\mathrm{a.s.}
	\end{aligned}
	\]
	
	Now, by \eqref{eq:cox-positive-survival},
	$
	S_t=e^{-\Lambda_t}>0
	$
	for every $t\in[0,T]$, almost surely.
	Since $\Lambda$ is nonnegative and nondecreasing,
	we have
	\[
	0\leq\Lambda_t\leq\Lambda_T<\infty,
	\qquad 0\leq t\leq T,
	\]
	almost surely. Therefore,
	$
	0<e^{-\Lambda_T}\leq S_t=e^{-\Lambda_t}\leq1.
	$
	Since $H_t\geq0$ and $da_t$ is a positive
	measure, integration yields
	\[
	e^{-\Lambda_T}\int_{(0,T]}H_t\,da_t
	\leq
	\int_{(0,T]}H_tS_t\,da_t
	\leq
	\int_{(0,T]}H_t\,da_t.
	\]
	
	Since $e^{-\Lambda_T}>0$ almost surely,
	the middle integral vanishes if and only if
	the right-hand integral vanishes. Hence,
	\[
	\int_{(0,T]}H_tS_t\,da_t=0
	\quad\Longleftrightarrow\quad
	\int_{(0,T]}H_t\,da_t=0,
	\]
	almost surely.

	Combining the two equivalences proves
	\eqref{eq:cox-contact-equivalence}.
	
	The non-strict case follows from
	\eqref{eq:cox-nonstrict-identity}
	by the same argument.
	
\end{proof}

\begin{remark}[Connection with the general support criterion]
	\label{rem:cox-support-criterion}
	
	The Cox construction provides an explicit
	instance of the optional support criterion
	established in Theorem~\ref{optional}.
	
	Indeed, the Az\'ema supermartingale is
	$
	G_t=S_t=e^{-\Lambda_t}.
	$
	Since $\Lambda_T<\infty$ almost surely,
	\[
	G_t>0,
	\qquad 0\leq t\leq T.
	\]
	
	Consequently, the strict positivity
	condition
	\[
	G>0
	\qquad m_a^o\text{-a.e.}
	\]
	is automatically satisfied for every
	integrable $\mathbb F$-adapted increasing
	process $a$.
	
	Thus, the Cox construction ensures that
	stopping at $\tau$ does not destroy
	contact information carried by any
	integrable reference Stieltjes measure.
	
\end{remark}

\begin{example}[Contact transfer for an atomic measure]
	\label{ex:cox-atomic-contact}
	
	Let $T>t_0>0$, and suppose that
	\[
	\Lambda_t=\lambda t,
	\qquad \lambda>0.
	\]
	
	Then
	\[
	\tau=\frac{E}{\lambda},
	\qquad
	S_t=e^{-\lambda t}.
	\]
	
	Consider the increasing process
	$
	a_t=\mathbf1_{\{t\geq t_0\}},
	$
	whose Stieltjes measure is
	$
	da_t=\delta_{t_0}(dt).
	$
	
	For any nonnegative deterministic
	function $H$, we obtain
	\[
	\int_{(0,T]}
	H_t\mathbf1_{\{t<\tau\}}\,da_t
	=
	H_{t_0}\mathbf1_{\{t_0<\tau\}}.
	\]
	
	Taking expectations gives
	\[
	\begin{aligned}
		\mathbb E\int_{(0,T]}
		H_t\mathbf1_{\{t<\tau\}}\,da_t
		&=
		H_{t_0}\mathbb P(\tau>t_0)
		=
		H_{t_0}e^{-\lambda t_0}.
	\end{aligned}
	\]
	
	On the other hand,
	\[
	\int_{(0,T]}H_tS_t\,da_t
	=
	H_{t_0}e^{-\lambda t_0}.
	\]
	
	Therefore,
	\[
	\mathbb E\int_{(0,T]}
	H_t\mathbf1_{\{t<\tau\}}\,da_t
	=
	\mathbb E\int_{(0,T]}H_tS_t\,da_t.
	\]
	
	Since
	$
	e^{-\lambda t_0}>0,
	$
	the stopped contact condition is
	equivalent to
	$
	H_{t_0}=0,
	$
	which is precisely the unstopped
	contact condition
	\[
	\int_{(0,T]}H_t\,da_t=0.
	\]
	
	This example shows explicitly that
	the transfer result remains valid
	when the entire reflection measure
	is concentrated at a single time.
	
\end{example}

%
	The results of this section concern
	the preservation of zero-contact
	conditions under Cox stopping.
	They do not, by themselves, establish
	the reduction or reconstruction of
	a reflected backward stochastic
	differential equation.
	A complete reconstruction theorem
	additionally requires appropriate
	martingale representations, a compatible
	default compensator, a transformed
	generator, terminal conditions, and
	obstacle regularity.
	These additional issues will be addressed
	in the next section for a Brownian Cox model
	with continuous reflection.

\subsection{Extension to reference-measurable random measures}
\label{subsec:cox-general-measures}

The conditional Tonelli argument of Lemma~\ref{coxlemma}
continues to hold when the Stieltjes measure $da$ is
replaced by an arbitrary finite random measure determined
by the reference information. We record this extension
without repeating the preceding proof.

\begin{lemma}[Cox weighting beyond adapted Stieltjes measures]
\label{lem:cox-general-measures}
Let $\mu$ be a finite random measure on $(0,T]$ whose
values on Borel sets are $\mathcal F_\infty$-measurable,
and let $H\geq0$ be jointly
$\mathcal F_\infty\otimes\mathcal B((0,T])$-measurable.
Under the Cox assumptions of Section~\ref{sec:cox-survival},
\[
\mathbb E\int_{(0,T]}H_t\mathbf1_{\{t<\tau\}}\,\mu(dt)
=\mathbb E\int_{(0,T]}H_te^{-\Lambda_t}\,\mu(dt)
=\mathbb E\int_{(0,T]}H_t\mathbf1_{\{t\leq\tau\}}\,\mu(dt).
\]
The identities are valid for atomic, singular continuous,
and absolutely continuous random measures alike.
\end{lemma}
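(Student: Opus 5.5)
The plan is to repeat the conditional Tonelli argument of Lemma~\ref{coxlemma}, now conditioning on $\mathcal F_\infty$ with the whole random measure $\mu$ and the integrand $H$ treated as fixed. Three facts about the conditional structure are used. First, $\mu$ and $H$ are $\mathcal F_\infty$-measurable objects. Second, $\Lambda$ is $\mathbb F$-adapted and continuous, so the map $(\omega,t)\mapsto\Lambda_t(\omega)$ is $\mathcal F_\infty\otimes\mathcal B((0,T])$-measurable. Third, $E$ is independent of $\mathcal F_\infty$.

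The first step is to write
\[
\mathbf1_{\{t<\tau\}}=\mathbf1_{\{E>\Lambda_t\}},
\qquad
\mathbf1_{\{t\leq\tau\}}=\mathbf1_{\{E\geq\Lambda_t\}}\ \text{up to }\{E=\Lambda_t\}.
\]
These are exactly the identifications used in Section~\ref{sec:cox-survival}, and they rely on continuity and monotonicity of $\Lambda$.

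The second step is to compute the joint expectation by the freezing (disintegration) lemma for independent variables. Define the $\mathcal F_\infty$-measurable functional
\[
\Phi(x)=\int_{(0,T]}H_t\mathbf1_{\{x>\Lambda_t\}}\,\mu(dt),
\qquad x\ge0,
\]
which is jointly measurable in $(\omega,x)$. Since $E$ is independent of $\mathcal F_\infty$, we have $\mathbb E[\Phi(E)\mid\mathcal F_\infty]=\int_0^\infty\Phi(x)e^{-x}\,dx$. Applying Tonelli to the product of the kernel $\mu(\omega,dt)$ with $e^{-x}dx$ (everything nonnegative) interchanges the $dx$ and $\mu(dt)$ integrations. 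This gives
\[
\int_{(0,T]}H_t\Bigl(\int_0^\infty\mathbf1_{\{x>\Lambda_t\}}e^{-x}dx\Bigr)\mu(dt)=\int_{(0,T]}H_te^{-\Lambda_t}\,\mu(dt).
\]
Taking expectations yields the first equality of the statement.

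For the non-strict indicator, the inner $dx$-integral changes only on the single point $\{x=\Lambda_t\}$, which is Lebesgue-null. The same computation therefore produces the same weight $e^{-\Lambda_t}$, and this gives the second equality. The computation never uses any structure of $\mu$ beyond finiteness and measurability, so it covers atomic and singular measures. It also does not need adaptedness of $\mu$ or $H$, only $\mathcal F_\infty$-measurability.

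The main obstacle is the measurability bookkeeping needed to justify Tonelli. We need $(\omega,t,x)\mapsto H_t(\omega)\mathbf1_{\{x>\Lambda_t(\omega)\}}$ to be $\mathcal F_\infty\otimes\mathcal B((0,T])\otimes\mathcal B(\mathbb R_+)$-measurable, and we need $\mu$ to be a genuine kernel from $(\Omega,\mathcal F_\infty)$ to $(0,T]$. The standing assumption that $\mu(B)$ is $\mathcal F_\infty$-measurable for each Borel set $B$ gives the kernel property, via a monotone class argument from rectangles $A\times B$. Once this is in place, both sides are defined as integrals of nonnegative functions against a product-type measure, and all interchanges are legitimate. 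No integrability condition is needed, since the identities hold in $[0,\infty]$.
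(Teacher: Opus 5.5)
Your proposal is correct and follows essentially the same route as the paper: condition on $\mathcal F_\infty$ so that $\mu$, $H$ and $\Lambda$ are frozen, integrate out the independent exponential threshold $E$ to get the weight $e^{-\Lambda_t}$, and use that its law is atomless for the non-strict indicator. Your freezing-lemma computation of $\mathbb E[\Phi(E)\mid\mathcal F_\infty]$, with Tonelli over $\mu(\omega,dt)\otimes e^{-x}\,dx$, is a careful implementation of the paper's ``conditional Tonelli'' step, and the pointwise sandwich $\mathbf 1_{\{E>\Lambda_t\}}\le\mathbf 1_{\{t\le\tau\}}\le\mathbf 1_{\{E\ge\Lambda_t\}}$ settles the non-strict case in one line.
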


\begin{proof}
Condition on $\mathcal F_\infty$. The measure $\mu$ and
processes $H,\Lambda$ are then fixed, while $E$ retains its
unit exponential law. The conditional probabilities of
$\{E>\Lambda_t\}$ and $\{E\geq\Lambda_t\}$ are both
$e^{-\Lambda_t}$, because the exponential distribution
is atomless. Conditional Tonelli's theorem and then the
tower property give the stated identities.
\end{proof}

\begin{corollary}[Zero-contact transfer for reference measures]
\label{cor:cox-general-measure-contact}
Under the hypotheses of Lemma~\ref{lem:cox-general-measures},
\[
\int_{(0,T]}H_t\,\mu(dt)=0\quad\text{a.s.}
\]
if and only if either stopped contact integral, with
$\mathbf1_{\{t<\tau\}}$ or $\mathbf1_{\{t\leq\tau\}}$,
vanishes almost surely.
\end{corollary}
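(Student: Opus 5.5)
The plan is to mirror the proof of Corollary~\ref{coxcor}, replacing Lemma~\ref{coxlemma} by Lemma~\ref{lem:cox-general-measures}. Fix one endpoint convention, say $\mathbf1_{\{t<\tau\}}$; the argument for $\mathbf1_{\{t\leq\tau\}}$ is identical because Lemma~\ref{lem:cox-general-measures} gives the same right-hand side for both indicators. Lemma~\ref{lem:cox-general-measures} yields
\[
\mathbb E\int_{(0,T]}H_t\mathbf1_{\{t<\tau\}}\,\mu(dt)
=\mathbb E\int_{(0,T]}H_te^{-\Lambda_t}\,\mu(dt).
\]
Both integrands are nonnegative random variables with values in $[0,\infty]$. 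Lemma~\ref{zero} therefore shows that the stopped integral vanishes a.s.\ if and only if $\int_{(0,T]}H_te^{-\Lambda_t}\,\mu(dt)=0$ a.s.

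The second step removes the weight pathwise. Since $\Lambda$ is nondecreasing, nonnegative and $\Lambda_T<\infty$ a.s., we have $e^{-\Lambda_T}\leq e^{-\Lambda_t}\leq1$ for $t\in(0,T]$ almost surely. Since $\mu$ is a positive measure and $H\geq0$, this gives the sandwich
\[
e^{-\Lambda_T}\int_{(0,T]}H_t\,\mu(dt)\leq\int_{(0,T]}H_te^{-\Lambda_t}\,\mu(dt)\leq\int_{(0,T]}H_t\,\mu(dt).
\]
Because $e^{-\Lambda_T}>0$ a.s., the weighted integral vanishes exactly when the unweighted one does, outside a null set. This holds even when the integrals equal $+\infty$, since multiplying $+\infty$ by a strictly positive finite constant is still $+\infty$. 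Chaining the two equivalences proves the claim for each endpoint convention.

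The only point needing care is measurability. The integrals against $\mu$ must be random variables, so that Lemma~\ref{zero} applies and the almost-sure statements make sense. This follows from the joint $\mathcal F_\infty\otimes\mathcal B((0,T])$-measurability of $H$ together with the fact that $\mu$ is a kernel with $\mathcal F_\infty$-measurable values. A monotone class argument from indicators of product sets gives this, as is already implicit in Lemma~\ref{lem:cox-general-measures}.

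For the stopped integrals, the indicator $\mathbf1_{\{t<\tau\}}=\mathbf1_{\{E>\Lambda_t\}}$ is jointly measurable in $(\omega,t)$ with respect to $\sigma(\mathcal F_\infty,E)\otimes\mathcal B$, because $\Lambda$ has continuous paths. I expect no further obstacle beyond this check.
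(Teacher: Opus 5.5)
Your proposal is correct and follows the paper's proof essentially verbatim. Lemma~\ref{lem:cox-general-measures} gives equality of expectations; Lemma~\ref{zero} turns this into equivalence of almost-sure vanishing for the weighted integral; and the pathwise sandwich $e^{-\Lambda_T}\int H_t\,\mu(dt)\leq\int H_te^{-\Lambda_t}\,\mu(dt)\leq\int H_t\,\mu(dt)$, with $e^{-\Lambda_T}>0$ a.s., removes the weight. Your measurability remarks are sound and only make explicit what the paper leaves implicit.
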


\begin{proof}
By Lemma~\ref{lem:cox-general-measures}, the stopped
integrals have the same expectation as the nonnegative
integral weighted by $e^{-\Lambda_t}$. Since
$e^{-\Lambda_T}\leq e^{-\Lambda_t}\leq1$ and
$e^{-\Lambda_T}>0$ almost surely, that weighted integral
vanishes precisely when the unweighted one does. The
conclusion follows from nonnegativity.
\end{proof}

\section{A complete Brownian Cox RBSDE theorem}
\label{sec:cox-rbsde}

We now establish a reduction--reconstruction result
for reflected backward stochastic differential
equations under a Cox random horizon.

In contrast to the preceding sections, which
concerned the transfer of contact conditions,
the present result provides an explicit
reconstruction of the full enlarged-filtration
equation, including its default martingale,
jump coefficient, and reflection process.

\subsection{Probabilistic setting and assumptions}

Assume that $\mathbb F$ is the augmented natural
filtration of a $d$-dimensional Brownian motion
$W=(W_t)_{0\leq t\leq T}$. We consider the Cox construction introduced
in Section~\ref{sec:cox-survival}, with
\[
\Lambda_t=\int_0^t\lambda_s\,ds,
\]
where $\lambda$ is nonnegative,
$\mathbb F$-progressively measurable, and
satisfies
\begin{equation}\label{eq:cox-intensity-bound}
	0\leq\lambda_t\leq\bar\lambda
\end{equation}
for some deterministic constant
$\bar\lambda<\infty$,
$dt\otimes d\mathbb P$-almost everywhere.

The random time $\tau$ is defined by
\[
\tau=\inf\{t\geq0:\Lambda_t\geq E\},
\]
where $E$ is a unit exponential random
variable independent of $\mathcal F_\infty$.

Let $\mathbb G$ be the usual progressive
enlargement of $\mathbb F$ by $\tau$.

Under the Cox construction, the immersion
property, also known as hypothesis $(H)$,
holds between $\mathbb F$ and its progressive
enlargement $\mathbb G$ by $\tau$
(see Jeanblanc and Le Cam~\cite[Section~5]{JeanblancLeCam}).

Consequently, every $\mathbb F$-local
martingale remains a $\mathbb G$-local
martingale. In particular, since $W$
is continuous and
$
\langle W^i,W^j\rangle_t=\delta_{ij}t,
$
L\'evy's characterization implies that
$W$ remains a $d$-dimensional
$\mathbb G$-Brownian motion.
Define the default indicator
$
D_t=\mathbf1_{\{\tau\leq t\}},
$
and the compensated default martingale
\begin{equation*}\label{eq:cox-default-martingale}
	M_t
	=
	D_t-\int_0^{t\wedge\tau}\lambda_s\,ds.
\end{equation*}

Then $M$ is a $\mathbb G$-martingale, and
$
\Delta M_\tau=1
\text{ on }\{\tau<\infty\}.
$
More precisely,
\begin{equation}\label{eq:cox-default-differential}
	dD_t
	=
	dM_t+
	\mathbf1_{\{t\leq\tau\}}\lambda_t\,dt,
\end{equation}
where the survival indicator is understood
in its predictable version.

We use the classical spaces
$\mathcal S^2_{\mathbb F}$,
$\mathcal H^2_{\mathbb F}$, and
$\mathcal A^2_{\mathbb F}$.

Here, $\mathcal S^2_{\mathbb F}$ denotes
the space of continuous adapted processes
$y$ such that
\[
\mathbb E\left[
\sup_{0\leq t\leq T}|y_t|^2
\right]<\infty,
\]
and $\mathcal H^2_{\mathbb F}$ denotes
the space of predictable processes $z$
such that
\[
\mathbb E\int_0^T|z_t|^2\,dt<\infty.
\]

The space $\mathcal A^2_{\mathbb F}$
consists of continuous, adapted,
nondecreasing processes $k$, starting
from zero, satisfying
\[
\mathbb E[k_T^2]<\infty.
\]

We impose the following assumptions.

\medskip
\noindent
\textbf{(H1) Terminal condition.}
The terminal random variable satisfies
$
\xi\in L^2(\mathcal F_T).
$

\medskip
\noindent
\textbf{(H2) Obstacle and recovery process.}
The processes $L$ and $R$ are continuous,
$\mathbb F$-adapted, and belong to
$\mathcal S^2_{\mathbb F}$.
Moreover,
$
L_T\leq\xi$
$\mathrm{a.s.}
$
We also assume
\[
L_t\leq R_t,
\qquad 0\leq t\leq T,\quad \mathrm{a.s.}
\]

\medskip
\noindent
\textbf{(H3) Generator.}
The function
$
f:\Omega\times[0,T]\times
\mathbb R\times\mathbb R^d\times\mathbb R
\longrightarrow\mathbb R
$
is progressively measurable with respect
to $\mathbb F$ for each fixed $(y,z,u)$.

There exists a constant $C_f>0$ such that
\[
\begin{aligned}
	&|f(t,y,z,u)-f(t,y',z',u')|
	\leq C_f
	\left(
	|y-y'|+|z-z'|+|u-u'|
	\right)
\end{aligned}
\]
for all admissible arguments,
$dt\otimes d\mathbb P$-almost everywhere.
Furthermore,
\[
\mathbb E\int_0^T
|f(t,0,0,0)|^2\,dt<\infty.
\]

\medskip
Define the reduced generator by
\begin{equation}\label{eq:cox-reduced-driver}
	F(t,y,z)
	=
	f(t,y,z,R_t-y)
	+
	\lambda_t(R_t-y).
\end{equation}

By the Lipschitz property of $f$,
the boundedness of $\lambda$ \eqref{eq:cox-intensity-bound}, and
the square integrability of $R$,
the generator $F$ is progressively
measurable, uniformly Lipschitz in
$(y,z)$, and satisfies
\[
\mathbb E\int_0^T
|F(t,0,0)|^2\,dt<\infty.
\]

Thus, the standard assumptions for
the Brownian reflected BSDE with
continuous obstacle $L$ are satisfied (see, e.g., \cite{ElKaroui}).

\subsection{Reduction and reconstruction theorem}

\begin{theorem}[Reduction and reconstruction
	with continuous reflection]
	\label{coxthm}
	
	Under assumptions \textbf{(H1)--(H3)},
	the following assertions hold.
	
	\medskip
	\noindent
	\textit{(i) Existence and uniqueness
		of the reduced RBSDE.}
	
	There exists a unique triple
	\[
	(y,z,k)\in
	\mathcal S^2_{\mathbb F}
	\times\mathcal H^2_{\mathbb F}
	\times\mathcal A^2_{\mathbb F}
	\]
	satisfying, for every $t\in[0,T]$,
	\begin{equation}\label{reduced}
		\begin{aligned}
			y_t
			={}&\xi
			+\int_t^T F(s,y_s,z_s)\,ds
			+k_T-k_t
			-\int_t^Tz_s\,dW_s,
		\end{aligned}
	\end{equation}
	together with
	\begin{equation}\label{eq:reduced-reflection}
		y_t\geq L_t,
		\qquad 0\leq t\leq T,
	\end{equation}
	and
	\begin{equation}\label{eq:reduced-skorokhod}
		\int_{(0,T]}(y_t-L_t)\,dk_t=0.
	\end{equation}
	
	\medskip
	\noindent
	\textit{(ii) Reconstruction in the
		enlarged filtration.}
	
	Define, for $0\leq t\leq T$,
	\begin{equation}\label{eq:cox-reconstruction}
		\begin{aligned}
			Y_t
			&=
			y_t\mathbf1_{\{t<\tau\}}
			+
			R_\tau\mathbf1_{\{\tau\leq t\}},
			\\
			Z_t
			&=
			z_t\mathbf1_{\{t\leq\tau\}},
			\\
			U_t
			&=
			(R_t-y_t)\mathbf1_{\{t\leq\tau\}},
			\\
			K_t
			&=
			k_{t\wedge\tau}.
		\end{aligned}
	\end{equation}
	
	The random variable $R_\tau$ in this
	definition is used only on
	$\{\tau\leq T\}$.
	
	Set
	\begin{equation}\label{eq:cox-terminal-payoff}
		\Xi
		=
		\xi\mathbf1_{\{\tau>T\}}
		+
		R_\tau\mathbf1_{\{\tau\leq T\}}.
	\end{equation}
	
	Then $(Y,Z,U,K)$ satisfies the
	$\mathbb G$-reflected BSDE stopped
	at $T\wedge\tau$:
	\begin{equation}\label{enlarged}
		\begin{aligned}
			Y_{t\wedge\tau}
			={}&
			\Xi
			+\int_{t\wedge\tau}^{T\wedge\tau}
			f(s,Y_{s-},Z_s,U_s)\,ds
			+K_{T\wedge\tau}-K_{t\wedge\tau}
			-\int_{t\wedge\tau}^{T\wedge\tau}
			Z_s\,dW_s
			-\int_{t\wedge\tau}^{T\wedge\tau}
			U_s\,dM_s.
		\end{aligned}
	\end{equation}
	
	Moreover,
	\begin{equation*}\label{eq:cox-enlarged-obstacle}
		Y_t\geq L_t
		\qquad\text{on }\{t<\tau\},
	\end{equation*}
	and
	\begin{equation*}\label{eq:cox-enlarged-contact}
		\int_{(0,T]}
		\mathbf1_{\{t\leq\tau\}}
		(Y_{t-}-L_{t-})\,dK_t
		=0
		\qquad\mathrm{a.s.}
	\end{equation*}
	
	The reconstructed processes satisfy
	the corresponding square-integrability
	requirements, including
	\[
	\mathbb E\int_0^{T\wedge\tau}
	|U_t|^2\lambda_t\,dt<\infty.
	\]
	
	\medskip
	\noindent
	\textit{(iii) Converse within the
		reduction class.}
	
	Conversely, suppose that a solution
	$(Y,Z,U,K)$ of \eqref{enlarged},
	satisfying the corresponding obstacle
	and contact conditions, admits the
	representation
	\eqref{eq:cox-reconstruction}
	for some
	\[
	(y,z,k)\in
	\mathcal S^2_{\mathbb F}
	\times\mathcal H^2_{\mathbb F}
	\times\mathcal A^2_{\mathbb F},
	\]
	without assuming in advance that
	$(y,z,k)$ solves the reduced equation.
	
	Then $(y,z,k)$ satisfies
	\eqref{reduced}--\eqref{eq:reduced-skorokhod}.
	
	Consequently, the reconstructed solution
	is unique within the specified
	$\mathbb F$-reduction class.
	
\end{theorem}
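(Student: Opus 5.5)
For part (i), we simply invoke the classical theory of El Karoui et al.~\cite{ElKaroui}. As observed right before the statement, the reduced generator $F$ in \eqref{eq:cox-reduced-driver} is progressively measurable and uniformly Lipschitz in $(y,z)$, with Lipschitz constant at most $C_f+\bar\lambda$, and $F(\cdot,0,0)$ is square integrable. Moreover $\xi\in L^2(\mathcal F_T)$, the obstacle $L$ is continuous and lies in $\mathcal S^2_{\mathbb F}$, and $L_T\le\xi$. Existence and uniqueness of $(y,z,k)$ satisfying \eqref{reduced}--\eqref{eq:reduced-skorokhod} follow directly.

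For part (ii), we apply It\^o's product formula in $\mathbb G$ to $Y_t=y_t(1-D_t)+R_\tau D_t$ on $[0,T\wedge\tau]$. Since $y$ is continuous, the jump of $Y$ at $\tau$ equals $(R_\tau-y_\tau)\Delta D_\tau=U_\tau\,dD_\tau$, and on $[0,\tau)$ we have $dY_t=dy_t$. Hence
\[
dY_t=\mathbf 1_{\{t\le\tau\}}dy_t+U_t\,dD_t
=\mathbf 1_{\{t\le\tau\}}\bigl(-F(t,y_t,z_t)\,dt-dk_t+z_t\,dW_t\bigr)+U_t\,dM_t+U_t\lambda_t\mathbf 1_{\{t\le\tau\}}\,dt,
\]
where the second equality uses \eqref{eq:cox-default-differential} and the fact that $W$ remains a $\mathbb G$-Brownian motion by immersion. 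On $\{t\le\tau\}$ we have $Y_{t-}=y_t$, $Z_t=z_t$ and $U_t=R_t-y_t$. Therefore $F(t,y_t,z_t)-\lambda_tU_t=f(t,Y_{t-},Z_t,U_t)$: the term $\lambda_t(R_t-y)$ built into $F$ cancels exactly against the compensator of $D$. Integrating from $t\wedge\tau$ to $T\wedge\tau$ and using $Y_{T\wedge\tau}=\Xi$ (on $\{\tau=T\}$ one checks $Y_T=R_T$, consistent with \eqref{eq:cox-terminal-payoff}) yields \eqref{enlarged} with $K=k_{\cdot\wedge\tau}$. The obstacle condition on $\{t<\tau\}$ holds because $Y_t=y_t\ge L_t$ there. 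For the contact condition, $k$ and $L$ are continuous and $Y_{t-}=y_t$ for $t\le\tau$, so
\[
\int\mathbf 1_{\{t\le\tau\}}(Y_{t-}-L_{t-})\,dK_t=\int\mathbf 1_{\{t\le\tau\}}(y_t-L_t)\,dk_t\le\int(y_t-L_t)\,dk_t=0.
\]
Integrability follows from $|U|\le|R|+|y|$, the bound $\lambda\le\bar\lambda$, and the fact that $R,y\in\mathcal S^2_{\mathbb F}$. The martingale property of the stochastic integrals is then standard.

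For part (iii), we run the computation of (ii) in reverse. Writing $dy_t=-F(t,y_t,z_t)\,dt-dk_t+z_t\,dW_t+d\mathcal E_t$ with an unknown error term is circular, so we proceed differently. Substituting the representation \eqref{eq:cox-reconstruction} into \eqref{enlarged} and restricting to $\{t<\tau\}$, where $Y=y$, shows that the continuous $\mathbb F$-semimartingale
\[
N_t:=y_t-y_0+\int_0^t F(s,y_s,z_s)\,ds+k_t-\int_0^t z_s\,dW_s
\]
vanishes on $[0,\tau)$. Since $N$ is $\mathbb F$-adapted and continuous, the conditional survival argument of Lemma~\ref{coxlemma} applies: take the optional integrand $|N_t|$ against $dt$, where $\mathbb E\int|N_t|\mathbf 1_{\{t<\tau\}}dt=\mathbb E\int |N_t|S_t\,dt$ and $S>0$ by \eqref{eq:cox-positive-survival}. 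This gives $N\equiv0$ on $[0,T]$. For the terminal value, $y_T=\xi$ follows from $\Xi=\xi$ on $\{\tau>T\}$, an event of positive conditional probability $S_T$ given $\mathcal F_T$. Then $y\ge L$ transfers from $\{t<\tau\}$ by the same argument, applied to $(L_t-y_t)^+$. The Skorokhod condition follows from Corollary~\ref{coxcor} with $H=y-L\ge0$ and $a=k$, since $\int\mathbf 1_{\{t\le\tau\}}(y_t-L_t)\,dk_t=0$. Uniqueness in the class then follows from (i).

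The main obstacle is this reduction step in (iii): from an identity that only holds before $\tau$, one must extract an identity for $\mathbb F$-processes on all of $[0,T]$, including the terminal condition. We will handle it uniformly through the positivity $S_t>0$ and Lemma~\ref{lem:cox-general-measures}, always testing against nonnegative $\mathbb F$-adapted integrands.
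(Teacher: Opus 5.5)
Your proposal is correct. Parts (i) and (ii) match the paper's proof essentially step for step: the classical El Karoui et al.\ result, the product rule for $y(1-D)+R_\tau D$, the cancellation of $\lambda_t(R_t-y_t)$ against the compensator of $D$, and the contact identity via $Y_{t-}=y_t$ on $\{t\le\tau\}$.

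The genuine difference is in how you localize in the converse (iii).

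\textbf{The paper's route.} It works on the single survival event $\{\tau>T\}$. On that event $Y=y$, $Z=z$, $U=R-y$, $K=k$ and $M_t=-\int_0^t\lambda_s\,ds$ on all of $[0,T]$. It then packs every failure of the reduced equation, including the terminal condition, into one $\mathcal F_T$-measurable variable $V=|y_T-\xi|+\sup_{t\in\mathbb Q\cap[0,T]}|B_t|$. This variable is killed via $\mathbb E[(V\wedge n)\mathbf 1_{\{\tau>T\}}]=\mathbb E[(V\wedge n)S_T]$. The obstacle is handled the same way with $V_L=\sup_t(L_t-y_t)^+$.

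\textbf{Your route.} You work on the pre-default interval $[0,\tau)$. You obtain the forward residual $N$ by differencing the enlarged equation between $0$ and $t$, using $Y_0=y_0$, which holds because $\tau>0$ a.s. You then apply Lemma~\ref{coxlemma} with reference measure $a_t=t$ and use path continuity to get $N\equiv0$. The terminal value $y_T=\xi$ needs a separate appeal to $\{\tau>T\}$ and $S_T>0$.

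\textbf{What each buys.} Your version makes the recovery of the dynamics a literal instance of the Cox Fubini identity already used for contact transfer, namely Theorem~\ref{optional} with $da_t=dt$. This keeps the converse within the paper's own toolkit. The paper's version is more economical: one event and one conditioning handle the dynamics and the terminal condition at once. Both ultimately rest on the same positivity $S_T>0$. Both also need the same identification of the $\mathbb G$- and $\mathbb F$-stochastic integrals of $z$ against $W$ on the relevant event.
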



\begin{proof}
	
	We divide the proof into four steps.
	
	\medskip
	\noindent
	\textit{Step 1: Existence and uniqueness
		of the reduced equation.}
	
	By the definition
	\eqref{eq:cox-reduced-driver},
	\[
	F(t,y,z)
	=
	f(t,y,z,R_t-y)
	+
	\lambda_t(R_t-y).
	\]
	
	The assumptions on $f$, $R$ and $\lambda$
	imply that $F$ is progressively measurable
	and uniformly Lipschitz in $(y,z)$.
	
	Indeed, for any $(y,z)$ and $(y',z')$,
	\[
	\begin{aligned}
		|F(t,y,z)-F(t,y',z')|
		&\leq
		|f(t,y,z,R_t-y)
		-f(t,y',z',R_t-y')|
		+
		\lambda_t|y-y'|
		\\
		&\leq
		C_f\bigl(2|y-y'|+|z-z'|\bigr)
		+\bar\lambda|y-y'|.
	\end{aligned}
	\]
	
	Thus,
	\[
	|F(t,y,z)-F(t,y',z')|
	\leq C_F\bigl(|y-y'|+|z-z'|\bigr)
	\]
	for a deterministic constant $C_F$.
	
	Furthermore,
	\[
	\begin{aligned}
		|F(t,0,0)|
		&\leq
		|f(t,0,0,0)|
		+
		(C_f+\bar\lambda)|R_t|.
	\end{aligned}
	\]
	
	Consequently,
	\[
	\mathbb E\int_0^T
	|F(t,0,0)|^2\,dt<\infty.
	\]
	
	Since $L$ is continuous and belongs
	to $\mathcal S^2_{\mathbb F}$,
	$\xi\in L^2(\mathcal F_T)$, and
	$L_T\leq\xi$, the classical
	existence-and-uniqueness theorem
	for Brownian reflected BSDEs \cite[Theorem 5.2]{ElKaroui} applies.
	
	Therefore, there exists a unique
	solution $(y,z,k)$ satisfying
	\eqref{reduced}--\eqref{eq:reduced-skorokhod}.
	
	\medskip
	\noindent
	\textit{Step 2: Reconstruction of the
		enlarged-filtration dynamics.}
	
	Introduce
	\[
	u_t=R_t-y_t.
	\]
	
	The reduced BSDE
	\eqref{reduced} can be written in
	differential form as
	\begin{equation}\label{eq:cox-reduced-differential}
		dy_t
		=
		-\bigl[f(t,y_t,z_t,u_t)
		+\lambda_tu_t\bigr]\,dt
		-dk_t+z_t\,dW_t.
	\end{equation}
	
	Recall that
	$
	D_t=\mathbf1_{\{\tau\leq t\}}.
	$
	
	The reconstructed process $Y$ (first line in \eqref{eq:cox-reconstruction})
	can be written as
	\begin{equation*}\label{eq:cox-Y-product}
		Y_t
		=
		y_t(1-D_t)
		+
		\int_{(0,t]}R_s\,dD_s.
	\end{equation*}
	
	This representation is valid on
	$[0,T]$: the second term is zero
	before default and equals $R_\tau$
	after default.
	
	Applying integration by parts
	to $y_t(1-D_t)$ yields
	\[
	\begin{aligned}
		d\bigl(y_t(1-D_t)\bigr)
		={}&
		(1-D_{t-})\,dy_t
		-y_{t-}\,dD_t
		-d[y,D]_t.
	\end{aligned}
	\]
	
	Since $y$ is continuous and $D$
	is of finite variation, we get
	$
	[y,D]_t=0.
	$
	Moreover,
	$
	y_{t-}=y_t.
	$
	
	Therefore,
	\[
	d\bigl(y_t(1-D_t)\bigr)
	=
	(1-D_{t-})\,dy_t-y_t\,dD_t.
	\]
	
	Adding the differential of
	$\int_{(0,t]}R_s\,dD_s$ gives
	\begin{equation}\label{eq:cox-Y-differential}
		dY_t
		=
		(1-D_{t-})\,dy_t
		+
		(R_t-y_t)\,dD_t.
	\end{equation}
	
	Substituting
	\eqref{eq:cox-reduced-differential}
	into \eqref{eq:cox-Y-differential},
	we obtain
	\[
	\begin{aligned}
		dY_t
		={}&
		-(1-D_{t-})
		f(t,y_t,z_t,u_t)\,dt
		-(1-D_{t-})\lambda_tu_t\,dt
		-(1-D_{t-})\,dk_t
		\\
		&	+(1-D_{t-})z_t\,dW_t+u_t\,dD_t.
	\end{aligned}
	\]
	
	Now, by the compensated-default
	decomposition \eqref{eq:cox-default-differential}, we have
	$
	dD_t
	=
	dM_t+(1-D_{t-})\lambda_t\,dt.
	$
	Hence
	$
	u_t\,dD_t
	=
	u_t\,dM_t
	+
	(1-D_{t-})\lambda_tu_t\,dt.
	$
	The two terms involving
	$\lambda_tu_t\,dt$ cancel.
	
	Consequently,
	\[
	\begin{aligned}
		dY_t
		={}&
		-(1-D_{t-})
		f(t,y_t,z_t,u_t)\,dt
		-(1-D_{t-})\,dk_t
		+(1-D_{t-})z_t\,dW_t
		+u_t\,dM_t.
	\end{aligned}
	\]

Recall that
$
1-D_{t-}=\mathbf1_{\{t\leq\tau\}}$ for 
 $t>0.$
Thus, the reconstructed processes satisfy
\[
\begin{aligned}
	U_t
	&=u_t(1-D_{t-})
	=u_t\mathbf1_{\{t\leq\tau\}},\\
	Z_t
	&=z_t(1-D_{t-})
	=z_t\mathbf1_{\{t\leq\tau\}},\\
	K_t
	&=k_{t\wedge\tau}.
\end{aligned}
\]

Since $M$ is constant after $\tau$, we have
\[
\int_0^t u_s\,dM_s
=
\int_0^t U_s\,dM_s.
\]

Similarly, the Brownian integral is stopped:
\[
\int_0^t Z_s\,dW_s
=
\int_0^{t\wedge\tau}z_s\,dW_s.
\]

Finally, since $k$ is continuous,
\[
dK_t=(1-D_{t-})\,dk_t,
\]
and therefore
\[
K_t
=
\int_{(0,t]}(1-D_{s-})\,dk_s
=
k_{t\wedge\tau}.
\]
Hence, the three processes retain their
pre-default dynamics and are stopped at
$\tau$, without introducing any additional
jump in the reflection process $K$.

	Before default,
	\[
	Y_{t-}=y_t,
	\qquad
	Z_t=z_t,
	\qquad
	U_t=u_t.
	\]
	
	Thus, using the fact that values
	at $t=\tau$ do not affect the
	Lebesgue integral, we obtain
	\begin{equation}\label{eq:cox-enlarged-differential}
		dY_t
		=
		-\mathbf1_{\{t\leq\tau\}}
		f(t,Y_{t-},Z_t,U_t)\,dt
		-dK_t
		+Z_t\,dW_t
		+U_t\,dM_t.
	\end{equation}
	
	At the default time $\tau\leq T$,
	continuity of $y$ and $R$ gives
	\[
	Y_{\tau-}=y_\tau,
	\qquad
	Y_\tau=R_\tau.
	\]
	
	Hence
	$
	\Delta Y_\tau
	=
	R_\tau-y_\tau
	=
	u_\tau.
	$
	Since
	$
	\Delta M_\tau=1,
	$
	we also have
	$
	\Delta Y_\tau
	=
	U_\tau\Delta M_\tau.
	$
	
	The process $K$ is continuous and
	therefore contributes no jump
	at default.
	
	The reconstructed dynamics are
	thus valid both before default
	and at the default time.
	
	Integrating
	\eqref{eq:cox-enlarged-differential}
	between $t\wedge\tau$ and
	$T\wedge\tau$ yields
	\[
	\begin{aligned}
		Y_{t\wedge\tau}
		={}&
		Y_{T\wedge\tau}
		+\int_{t\wedge\tau}^{T\wedge\tau}
		f(s,Y_{s-},Z_s,U_s)\,ds
		+K_{T\wedge\tau}
		-K_{t\wedge\tau}
		\\
		&-\int_{t\wedge\tau}^{T\wedge\tau}
		Z_s\,dW_s
		-\int_{t\wedge\tau}^{T\wedge\tau}
		U_s\,dM_s.
	\end{aligned}
	\]
	
	Finally, if $\tau>T$, then
	$
	Y_T=y_T=\xi,
	$
	whereas if $\tau\leq T$, then
	$
	Y_\tau=R_\tau.
	$
	Therefore, using \eqref{eq:cox-terminal-payoff}, we derive
	\[
	Y_{T\wedge\tau}
	=
	\xi\mathbf1_{\{\tau>T\}}
	+
	R_\tau\mathbf1_{\{\tau\leq T\}}
	=\Xi.
	\]
	
	This proves the enlarged equation
	\eqref{enlarged}.
	
	\medskip
	\noindent
	\textit{Step 3: Reflection and
		square integrability.}
	
	Before default, the reconstructed
	process satisfies
	$
	Y_t=y_t.
	$
	Since the reduced solution satisfies
	$y_t\geq L_t$, we obtain
	$
	Y_t\geq L_t
	\text{ on }\{t<\tau\}.
	$
	
	Next, recall that
	$
	K_t=k_{t\wedge\tau}.
	$
	Since $k$ is continuous,
	nondecreasing, and starts from zero,
	the same properties hold for $K$.
	Moreover,
	$
	dK_t
	=
	\mathbf1_{\{t\leq\tau\}}\,dk_t.
	$
	By continuity of $y$ and $L$,
	$
	Y_{t-}-L_{t-}
	=
	y_t-L_t
	$
	at the times charged by $dK$,
	up to an irrelevant boundary
	contribution at $\tau$.
	Consequently,
	\[
	\begin{aligned}
		&\int_{(0,T]}
		\mathbf1_{\{t\leq\tau\}}
		(Y_{t-}-L_{t-})\,dK_t
		=
		\int_{(0,T]}
		\mathbf1_{\{t\leq\tau\}}
		(y_t-L_t)\,dk_t.
	\end{aligned}
	\]
	
	Since the reduced Skorokhod condition gives
	\[
	\int_{(0,T]}(y_t-L_t)\,dk_t=0
	\]
	and the integrand is nonnegative,
	we conclude that
	\[
	\int_{(0,T]}
	\mathbf1_{\{t\leq\tau\}}
	(Y_{t-}-L_{t-})\,dK_t=0
	\quad\mathrm{a.s.}
	\]
	
	We now verify square integrability.
	
	Since
	\[
	|Y_t|
	\leq
	\sup_{0\leq s\leq T}|y_s|
	+
	\sup_{0\leq s\leq T}|R_s|,
	\]
	we obtain
	\[
	\mathbb E\left[
	\sup_{0\leq t\leq T}|Y_t|^2
	\right]<\infty.
	\]
	
	Moreover,
	\[
	\mathbb E\int_0^T|Z_t|^2\,dt
	\leq
	\mathbb E\int_0^T|z_t|^2\,dt
	<\infty.
	\]
	
	For the default coefficient,
	\[
	|U_t|^2
	\leq
	2|R_t|^2+2|y_t|^2.
	\]
	
	Using the boundedness of $\lambda$,
	\[
	\begin{aligned}
		&\mathbb E\int_0^{T\wedge\tau}
		|U_t|^2\lambda_t\,dt
		\leq
		2\bar\lambda T
		\mathbb E\left[
		\sup_{0\leq t\leq T}|R_t|^2
		+
		\sup_{0\leq t\leq T}|y_t|^2
		\right]
		<\infty.
	\end{aligned}
	\]
	
	Finally,
	\[
	K_T=k_{T\wedge\tau}\leq k_T,
	\]
	so
	\[
	\mathbb E[K_T^2]
	\leq\mathbb E[k_T^2]<\infty.
	\]
	
	Thus the reconstructed processes
	satisfy the required integrability
	and reflection conditions.
	
	\medskip
	\noindent
	\textit{Step 4: Converse and uniqueness
		within the reduction class.}
	
	Suppose conversely that a solution
	$(Y,Z,U,K)$ of \eqref{enlarged}
	admits the reduction form
	\eqref{eq:cox-reconstruction}
	for some
	\[
	(y,z,k)\in
	\mathcal S^2_{\mathbb F}
	\times\mathcal H^2_{\mathbb F}
	\times\mathcal A^2_{\mathbb F}.
	\]
	
	On the event $\{\tau>T\}$,
	we have, for every $t\in[0,T]$,
	\[
	Y_t=y_t,
	\qquad
	Z_t=z_t,
	\qquad
	U_t=R_t-y_t,
	\qquad
	K_t=k_t.
	\]
	
	Moreover, on this event,
	\[
	M_t=-\int_0^t\lambda_s\,ds.
	\]
	
	Substituting these identities into
	the enlarged equation yields
	\[
	\begin{aligned}
		y_t
		={}&\xi
		+\int_t^T
		f(s,y_s,z_s,R_s-y_s)\,ds
		+\int_t^T
		\lambda_s(R_s-y_s)\,ds
		+k_T-k_t
		-\int_t^Tz_s\,dW_s.
	\end{aligned}
	\]
	
	By the definition of $F$,
	\[
	y_t
	=
	\xi+\int_t^T F(s,y_s,z_s)\,ds
	+k_T-k_t
	-\int_t^Tz_s\,dW_s \quad \text{ on }\,\, \{\tau>T\}.
	\]

	Define the 
	continuous residual process $B$, with $\mathcal{F}_T$-measurable coordinates
	\[
	\begin{aligned}
		B_t
		:={}&y_t-\xi
		-\int_t^T F(s,y_s,z_s)\,ds
		-(k_T-k_t)
		+\int_t^Tz_s\,dW_s.
	\end{aligned}
	\]
	
	Then
	$
	B_t=0
	\text{ for every }t\in[0,T]
	$
	on $\{\tau>T\}$.\footnote{At this stage of the converse proof, the reduced
			BSDE has only been established on the survival event
			$\{\tau>T\}$. In particular, setting $t=T$ yields
			$y_T=\xi$ on this event. }
	
	Let
	\[
	V
	=
	|y_T-\xi|
	+
	\sup_{t\in\mathbb Q\cap[0,T]}|B_t|,
	\]
	with $T$ also included in the
	supremum if necessary.
	
	By continuity of $B$,
	the variable $V$ detects any
	failure of the reduced equation.
	
	Furthermore,
	$
	V\geq0,$
	$
	V\in\mathcal F_T,
	$
	and
	$
	V\mathbf1_{\{\tau>T\}}=0$
	{ a.s.}

	For every integer $n\geq1$,
	\[
	\mathbb E\left[
	(V\wedge n)\mathbf1_{\{\tau>T\}}
	\right]=0.
	\]
	
	By the Cox survival identity,
	\[
	\mathbb P(\tau>T\mid\mathcal F_T)
	=
	S_T=e^{-\Lambda_T}.
	\]
	
	Therefore,
	\[
	\begin{aligned}
		0
		&=
		\mathbb E\left[
		(V\wedge n)\mathbf1_{\{\tau>T\}}
		\right]
		=
		\mathbb E\left[
		(V\wedge n)
		\mathbb E[
		\mathbf1_{\{\tau>T\}}
		\mid\mathcal F_T]
		\right]
		=
		\mathbb E[(V\wedge n)S_T].
	\end{aligned}
	\]
	
	Since
	$
	S_T>0
	\,\mathrm{a.s.},
	$
	and $V\wedge n\geq0$, we obtain
	$
	V\wedge n=0
	\,\mathrm{a.s.}
	$
	Letting $n\to\infty$ gives
	\[
	V=0
	\qquad\mathrm{a.s.}
	\]
	
	Consequently, the reduced equation
	\eqref{reduced} holds for every
	$t\in[0,T]$, almost surely.
	
	We now recover the obstacle condition.
	
	On $\{\tau>T\}$, the enlarged obstacle
	condition implies
	\[
	y_t\geq L_t
	\qquad 0\leq t\leq T.
	\]
	
	Define
	\[
	V_L
	=
	\sup_{0\leq t\leq T}(L_t-y_t)^+.
	\]
	
	By continuity of $L$ and $y$,
	$V_L$ is $\mathcal F_T$-measurable
	and nonnegative.
	
	Moreover,
	\[
	V_L\mathbf1_{\{\tau>T\}}=0
	\quad\mathrm{a.s.}
	\]
	
	Repeating the preceding conditional
	survival argument yields
	\[
	V_L=0
	\qquad\mathrm{a.s.}
	\]
	
	Hence
	\[
	y_t\geq L_t,
	\qquad 0\leq t\leq T,
	\quad\mathrm{a.s.}
	\]
	
	Finally, the enlarged Skorokhod
	condition implies
	\[
	\int_{(0,T]}
	\mathbf1_{\{t\leq\tau\}}
	(y_t-L_t)\,dk_t=0
	\quad\mathrm{a.s.}
	\]
	
	By Corollary~\ref{coxcor},
	the Cox survival positivity implies
	\[
	\int_{(0,T]}(y_t-L_t)\,dk_t=0
	\quad\mathrm{a.s.}
	\]
	
	Thus $(y,z,k)$ satisfies the
	reduced reflected BSDE together
	with its obstacle and minimality
	conditions.
	
	Uniqueness of the reduced Brownian
	RBSDE implies uniqueness of
	$(y,z,k)$.
	
	Since the enlarged solution
	$(Y,Z,U,K)$ is determined by
	\eqref{eq:cox-reconstruction},
	uniqueness follows within the
	specified reduction class.
	
	This completes the proof.
	
\end{proof}

\begin{remark}[Meaning and limitations of the converse]
	\label{rem:cox-converse}
	
	The reduction-form hypothesis in the
	converse is essential.
	
	The theorem does not assert that
	every solution of the enlarged-filtration
	RBSDE automatically admits the
	representation
	\eqref{eq:cox-reconstruction}.
	Such a statement would require a
	separate analysis of martingale
	representation, reduction of
	enlarged-filtration processes,
	and compatibility of the default
	jump with the prescribed recovery process.
	Likewise, uniqueness in the full
	class of enlarged-filtration reflected
	BSDE solutions is not established
	by the preceding argument.
	The theorem proves existence by
	explicit reconstruction and uniqueness
	within the specified reference-filtration
	reduction class.
	
\end{remark}

	Our reduction--reconstruction result is closely
	related to the framework developed by Choulli
	and Alsheyab~\cite{ChoulliAlsheyab} for optimal
	stopping under a random horizon.
	
	Their analysis considers a general random time
	$\tau$ and its progressive enlargement of
	$\mathbb F$, under the strict positivity of
	the Az\'ema supermartingale $G$.
	In particular, they establish a parametrization
	of $\mathbb G$-optional reward processes in
	terms of $\mathbb F$-observable quantities,
	derive representations of the corresponding
	Snell envelopes, and characterize the
	relationships between optimal stopping times
	in the two filtrations.
	The present theorem addresses a related but
	distinct problem. Rather than starting from
	the Snell envelope representation, we consider
	a nonlinear reflected BSDE driven by both
	a Brownian motion and a compensated default
	martingale, under a Cox random horizon.
	
	In this setting, the explicit survival process
	$
	G_t=e^{-\Lambda_t}>0
	$
	allows us to reconstruct the enlarged-filtration
	solution from the reduced Brownian reflected BSDE.
	More precisely, the default jump coefficient
	is identified by
	\[
	U_t=R_t-y_t
	\quad\text{before default},
	\]
	while the reduced generator takes the form
	\[
	F(t,y,z)
	=
	f(t,y,z,R_t-y)
	+
	\lambda_t(R_t-y).
	\]
	
	Furthermore, the survival-support identities
	established in the preceding sections ensure
	the transfer of the Skorokhod contact condition,
	including for general reference Stieltjes
	reflection measures.
	Thus, our result complements the optimal
	stopping approach of Choulli and Alsheyab \cite{ChoulliAlsheyab}
	by providing an explicit equation-level
	reconstruction in the Brownian Cox setting.

\section{Irregular obstacles and Mertens reflection}

Reflected BSDEs with irregular optional obstacles may
require two distinct increasing processes in the
reflection term. In the Mertens formulation, this
term typically takes the form
\[
A_t+C_{t-},
\]
where $A$ is predictable and increasing, while
$C$ is optional, purely discontinuous, and increasing.
See, for instance, Grigorova et
al.~\cite{Grigorova}.

For each $t\in(0,T]$, the left-upper
semicontinuous envelope of $\zeta$ is defined by
\[
{
	\bar\zeta_t
	:=
	\limsup_{\substack{s\uparrow t\\s<t}}\zeta_s
	=
	\lim_{\varepsilon\downarrow0}
	\sup_{s\in[(t-\varepsilon)^+,\,t)}\zeta_s.
}
\]
In particular, if $\zeta$ admits left limits,
then $\bar\zeta_t=\zeta_{t-}$.
This envelope allows the predictable contact
condition to be formulated even when the
obstacle has no left limits.

The two components of $A_t+C_{t-}$ correspond to different
contact mechanisms. Depending on the regularity
of the obstacle and the chosen formulation,
the minimality conditions may involve
\[
H^A_t=Y_{t-}-\bar\zeta_t,
\qquad
H^C_t=Y_t-\zeta_t,
\]
where $\bar\zeta$ denotes the appropriate
left-upper envelope of $\zeta$.

\subsection{Irregular-contact transfer}

The precise minimality conditions depend on
the regularity assumptions and on the
decomposition of the predictable reflection
component. In the following theorem, we
work directly with given nonnegative
predictable and optional contact integrands.

\begin{theorem}[Componentwise irregular-contact transfer]
	\label{mertens}
	
	Let $A$ and $C$ satisfy the standing assumptions
	on increasing processes introduced in
	Section~\ref{sec:preliminaries} 
	\footnote{Recall that these assumptions require
		the processes to be nonnegative, increasing,
		$\mathbb F$-adapted and right-continuous,
		with initial value zero and integrable terminal
		value, namely
		$A_0=C_{0-}=0$ and
		$\mathbb E[A_T]+\mathbb E[C_T]<\infty$.}.
	Assume, in addition, that $A$ is
	$\mathbb F$-predictable and $C$ is
	$\mathbb F$-optional.

	Let $H^A$ be a nonnegative
	$\mathbb F$-predictable process, and let
	$H^C$ be a nonnegative $\mathbb F$-optional
	process.
	Assume that the predictable and optional
	survival projections introduced in the
	preceding sections satisfy
	\[
	q>0\quad m_A^p\text{-a.e.},
	\qquad
	G>0\quad m_C^o\text{-a.e.}
	\]
	
	Then
	\[
	\begin{aligned}
		\int_{(0,T]}H^A_t\,dA_t=0
		\quad\text{a.s.}
		&\iff
		\int_{(0,T]}
		H^A_t\mathbf1_{\{t\leq\tau\}}\,dA_t=0
		\quad\text{a.s.},
		\\[1ex]
		\int_{(0,T]}H^C_t\,dC_t=0
		\quad\text{a.s.}
		&\iff
		\int_{(0,T]}
		H^C_t\mathbf1_{\{t<\tau\}}\,dC_t=0
		\quad\text{a.s.}
	\end{aligned}
	\]
	
	In particular, both equivalences hold
	under the Cox construction of Section~\ref{sec:cox-survival}.
	
\end{theorem}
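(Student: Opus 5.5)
The plan is to treat the two reflection components separately and obtain each equivalence as a direct specialization of the exact criteria in Section~\ref{sec:contact-transfer}. No new measure-theoretic argument should be needed: the work is in checking that each pair (increasing process, contact integrand) meets the hypotheses of the relevant theorem. The Cox clause then follows by verifying the positivity hypotheses in that model.

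\emph{Predictable component.} Apply Theorem~\ref{predictable} with $a=A$ and $H=H^A$. Here $A$ is $\mathbb F$-predictable and increasing with $A_0=0$ and $\mathbb E[A_T]<\infty$, and $H^A\geq0$ is predictable. The theorem states that the stopped condition with $\mathbf1_{\{t\leq\tau\}}$ is equivalent to the unstopped condition for every nonnegative predictable integrand exactly when $q>0$ $m_A^p$-a.e., and this is assumed. The forward implication is only monotonicity, since $\mathbf1_{\{t\leq\tau\}}\leq1$. For the reverse implication I would reproduce the chain behind \eqref{predEq}: the stopped integral vanishes, so $\mathbb E\int H^A q\,dA=0$, so $H^Aq=0$ $m_A^p$-a.e., so $H^A=0$ $m_A^p$-a.e., and therefore $\int H^A\,dA=0$ a.s.

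\emph{Optional component.} Apply Theorem~\ref{optional} with $a=C$ and $H=H^C$. This theorem only needs $C$ adapted, right-continuous and increasing, which gives optionality. The hypothesis $G>0$ $m_C^o$-a.e. then yields the equivalence with the strict indicator $\mathbf1_{\{t<\tau\}}$. The one point needing care is the endpoint convention. The strict indicator is the natural one here, because $C$ may jump at $\tau$ and Theorem~\ref{optional} is formulated with $\mu^<$. Footnote normalization $C_{0-}=0$ only affects a possible atom at $0$, and that atom lies outside $(0,T]$, so it plays no role.

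\emph{Cox case.} Here $G_t=e^{-\Lambda_t}\geq e^{-\Lambda_T}>0$ on $[0,T]$, so $G>0$ everywhere and hence $m_C^o$-a.e. For $q$ I would use $q_t=G_{t-}=e^{-\Lambda_{t}}$, which holds because $\Lambda$ is continuous, so $q>0$ as well. Alternatively, Corollary~\ref{coxcor} gives both equivalences directly for optional (in particular predictable) integrands and either indicator.

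The main obstacle, though a mild one, is justifying the identification $q=G_{-}$ and its positivity in the Cox case: $G$ is the right-continuous version and $q$ is only defined up to predictable indistinguishability. I would avoid this by invoking Corollary~\ref{coxcor}, which bypasses $q$ altogether.
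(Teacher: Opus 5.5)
Your proposal is correct and follows the paper's proof essentially verbatim. The first equivalence is Theorem~\ref{predictable} applied to $(A,H^A)$, the second is Theorem~\ref{optional} applied to $(C,H^C)$, and the Cox clause follows from $G=e^{-\Lambda}>0$ (with $q=G_{-}=e^{-\Lambda}$ by continuity of $\Lambda$) via Corollary~\ref{coxcor}, exactly as in the paper.
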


\begin{proof}
	By Theorem~\ref{predictable}, the
	strict positivity of $q$ with respect
	to $m_A^p$ yields
	\[
	\int_{(0,T]}H^A_t\,dA_t=0
	\iff
	\int_{(0,T]}
	H^A_t\mathbf1_{\{t\leq\tau\}}\,dA_t=0
	\quad\text{a.s.}
	\]
	
	Similarly, Theorem~\ref{optional}
	and the positivity of $G$ with
	respect to $m_C^o$ give
	\[
	\int_{(0,T]}H^C_t\,dC_t=0
	\iff
	\int_{(0,T]}
	H^C_t\mathbf1_{\{t<\tau\}}\,dC_t=0
	\quad\text{a.s.}
	\]
	
	Under the Cox construction,
	$
	G_t=e^{-\Lambda_t}>0,
	$
	and the corresponding predictable
	survival projection is also strictly
	positive on the relevant interval.
	Thus, Corollary~\ref{coxcor} applies
	to each reflection component.
\end{proof}

\begin{remark}[Scope of the transfer result]
	Theorem~\ref{mertens} concerns the transfer
	of contact identities for given
	$\mathbb F$-adapted reflection components.
	
	It does not establish a reduction theorem
	for arbitrary enlarged-filtration reflected
	BSDEs with irregular obstacles.
	In particular, reducing the enlarged
	obstacle, reconstructing the finite-variation
	term $A+C_{-}$ at the default boundary,
	and identifying the corresponding martingale
	representation require separate arguments.
\end{remark}


\begin{proposition}[Absence of reflection atoms
	at a Cox default]
	\label{prop:cox-reflection-boundary}
	
	Suppose that $\tau$ is constructed as in
	Section~ \ref{sec:cox-survival}, with continuous
	cumulative intensity $\Lambda$.
	Let $C$ be an $\mathbb F$-optional,
	RCLL increasing process satisfying
	the standing assumptions of
	Section~\ref{sec:preliminaries}.
	Then
	\[
	\Delta C_\tau
	\mathbf1_{\{0<\tau\le T\}}=0
	\quad\text{a.s.}
	\]
	
	Consequently, for every nonnegative
	$\mathbb F$-optional process $H$,
	\[
	\int_{(0,T]}
	H_t\mathbf1_{\{t<\tau\}}\,dC_t
	=
	\int_{(0,T]}
	H_t\mathbf1_{\{t\le\tau\}}\,dC_t
	\quad\text{a.s.}
	\]
\end{proposition}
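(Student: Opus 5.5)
The plan is to reduce the statement to the boundary identity \eqref{eq:boundary-difference} combined with Lemma~\ref{coxlemma}, taking the constant integrand $H\equiv 1$. Since $C$ is $\mathbb F$-optional, RCLL and increasing with $\mathbb E[C_T]<\infty$, it satisfies the standing assumptions, so Lemma~\ref{coxlemma} applies with $a=C$ and $H\equiv1$. This gives
\[
\mathbb E\int_{(0,T]}\mathbf1_{\{t<\tau\}}\,dC_t
=\mathbb E\int_{(0,T]}S_t\,dC_t
=\mathbb E\int_{(0,T]}\mathbf1_{\{t\leq\tau\}}\,dC_t .
\]
All three quantities are finite, being bounded by $\mathbb E[C_T]$. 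Subtracting and using \eqref{eq:boundary-difference} with $H\equiv1$ yields
\[
\mathbb E\bigl[\Delta C_\tau\mathbf1_{\{0<\tau\leq T\}}\bigr]=0 .
\]
The random variable inside the expectation is nonnegative, so it vanishes almost surely. This is the first assertion.

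I also want a self-contained check that does not pass through the lemma. Conditionally on $\mathcal F_\infty$, the path $t\mapsto C_t$ is fixed, and its jump set $J=\{t\in(0,T]:\Delta C_t>0\}$ is at most countable. On $\{0<\tau\le T\}$, continuity of $\Lambda$ gives $\Lambda_\tau=E$. Hence
\[
\mathbb P(\tau\in J\mid\mathcal F_\infty)\le\sum_{t\in J}\mathbb P(E=\Lambda_t\mid\mathcal F_\infty)=0,
\]
since $E$ is atomless and independent of $\mathcal F_\infty$. The only delicate point in this route is measurability: the set $\{\tau\in J\}$ must be an event. I would handle this by enumerating the jumps of $C$ through a sequence of $\mathbb F$-stopping times exhausting them, which is standard for RCLL adapted processes. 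The lemma-based route avoids this issue entirely, so I would present that one and only remark on the second.

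For the consequence, fix a nonnegative $\mathbb F$-optional $H$ and apply \eqref{eq:boundary-difference} with $a=C$:
\[
\int_{(0,T]}H_t\mathbf1_{\{t\le\tau\}}\,dC_t-\int_{(0,T]}H_t\mathbf1_{\{t<\tau\}}\,dC_t=H_\tau\Delta C_\tau\mathbf1_{\{0<\tau\le T\}}.
\]
By the first part this difference is zero almost surely, even when $H$ is unbounded, because the factor $\Delta C_\tau\mathbf1_{\{0<\tau\le T\}}$ is already zero. This avoids any integrability issue with $H$, with the convention $0\cdot\infty=0$. Alternatively, the equality is just \eqref{eq:cox-endpoint-equivalence} applied with $a=C$.

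The main obstacle is modest: making sure the subtraction of expectations is legitimate, which requires finiteness. Using $H\equiv1$ rather than a general $H$ secures this, since everything is then bounded by $\mathbb E[C_T]<\infty$.
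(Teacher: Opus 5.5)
Your proof is correct, but your main route differs from the paper's; your secondary ``self-contained check'' is essentially what the paper does.

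\textbf{The paper's route.} It does not invoke Lemma~\ref{coxlemma}. It observes that $\{\tau=\sigma\}\subseteq\{E=\Lambda_\sigma\}$ for every finite $\mathcal F_\infty$-measurable random time $\sigma$. Independence and atomlessness of $E$ then give $\mathbb P(\tau=\sigma)=0$. Next it exhausts the jumps of $C$ of size larger than $1/n$ by $\mathbb F$-stopping times $\sigma_{n,k}$ and concludes with a countable union bound. That enumeration is exactly how the measurability issue you flagged for $\{\tau\in J\}$ is resolved.

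\textbf{Your route.} You make the first assertion a corollary of the strict and non-strict identities \eqref{coxidentity}--\eqref{eq:cox-nonstrict-identity}, taken with $a=C$ and $H\equiv1$. Joint measurability in $(\omega,E,t)$ and the Fubini step inside the conditional Tonelli argument absorb the countable jump set. So you need neither an exhausting sequence of stopping times nor any discussion of the event $\{\tau\in J\}$.

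\textbf{What each buys.} Your route relies on $\mathbb E[C_T]<\infty$ to subtract finite expectations. The paper's argument needs no integrability of $C$ at all, and it isolates the actual mechanism: the Cox time almost surely misses every $\mathcal F_\infty$-measurable random time.

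\textbf{A remark on $H\equiv1$.} Taking $H\equiv1$ is the right choice. The last step of the paper's proof of Lemma~\ref{coxlemma} subtracts expectations for a general $H$, where they may be infinite. Passing through $H\equiv1$ first, as you do, is what makes that step rigorous. The first assertion of the proposition is in fact already stated in Remark~\ref{rem:cox-atoms}.

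\textbf{The consequence.} Your treatment, via \eqref{eq:boundary-difference} with the factor $\Delta C_\tau\mathbf1_{\{0<\tau\le T\}}$ vanishing almost surely, matches the paper's.
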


\begin{proof}
	Recall that
	\[
	\tau=\inf\{t\ge0:\Lambda_t\ge E\},
	\]
	where $E$ is a unit exponential random
	variable independent of $\mathcal F_\infty$.
	
	Since $\Lambda$ is continuous,
	\[
	\{\tau=t\}\subseteq\{E=\Lambda_t\},
	\qquad t<\infty.
	\]
	
	More generally, the same inclusion holds
	for any finite $\mathcal F_\infty$-measurable
	random time $\sigma$.
	
	Since $\Lambda_\sigma$ is
	$\mathcal F_\infty$-measurable and $E$
	is independent of $\mathcal F_\infty$
	with an atomless distribution,
	\[
	\mathbb P(\tau=\sigma)=0.
	\]
	
	For each $n\ge1$, the jump times of $C$
	whose jump sizes exceed $1/n$ can be
	enumerated by a finite or countable family
	of $\mathbb F$-stopping times
	$(\sigma_{n,k})_{k\ge1}$.
	
	Therefore,
	\[
	\begin{aligned}
		\mathbb P(
		0<\tau\le T,\ \Delta C_\tau\ne0)
		&\le
		\sum_{n,k\ge1}
		\mathbb P(\tau=\sigma_{n,k}<\infty)
		=0.
	\end{aligned}
	\]
	
	Finally, the difference between the two
	Stieltjes integrals is
	\[
	\begin{aligned}
		&\int_{(0,T]}
		H_t\mathbf1_{\{t\le\tau\}}\,dC_t
		-
		\int_{(0,T]}
		H_t\mathbf1_{\{t<\tau\}}\,dC_t
		=
		H_\tau\Delta C_\tau
		\mathbf1_{\{0<\tau\le T\}}
		=0
		\quad\text{a.s.}
	\end{aligned}
	\]
	This completes the proof.
\end{proof}

\begin{corollary}[Simultaneous transfer of
	Mertens contact conditions]
	\label{cor:mertens-simultaneous}
	
	Under the assumptions of
	Theorem~\ref{mertens}, the following
	equivalence holds:
	\[
	\begin{aligned}
		&\int_{(0,T]}H^A_t\,dA_t
		+
		\int_{(0,T]}H^C_t\,dC_t
		=0
		\quad\text{a.s.}\\
		&\qquad\Longleftrightarrow\\
		&\int_{(0,T]}
		H^A_t\mathbf1_{\{t\le\tau\}}\,dA_t
		+
		\int_{(0,T]}
		H^C_t\mathbf1_{\{t<\tau\}}\,dC_t
		=0
		\quad\text{a.s.}
	\end{aligned}
	\]
	
	In particular, this equivalence holds
	under the Cox construction of
	Section~\ref{sec:cox-survival}.
\end{corollary}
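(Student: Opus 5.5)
The plan is to reduce the simultaneous statement to the two componentwise equivalences of Theorem~\ref{mertens}, using only nonnegativity of the four integrals. Write
\[
I^A:=\int_{(0,T]}H^A_t\,dA_t,\qquad
I^C:=\int_{(0,T]}H^C_t\,dC_t,
\]
\[
J^A:=\int_{(0,T]}H^A_t\mathbf1_{\{t\le\tau\}}\,dA_t,\qquad
J^C:=\int_{(0,T]}H^C_t\mathbf1_{\{t<\tau\}}\,dC_t.
\]
All four are $[0,\infty]$-valued random variables, since $H^A,H^C\geq0$ and $dA,dC$ are positive Stieltjes measures.

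The first step is the elementary observation that a sum of two nonnegative random variables vanishes almost surely if and only if each summand vanishes almost surely. Indeed, $0\le I^A\le I^A+I^C$ and $0\le I^C\le I^A+I^C$, and the same holds for $J^A,J^C$. This gives
\[
\{I^A+I^C=0\ \text{a.s.}\}\iff\{I^A=0\ \text{a.s.}\ \text{and}\ I^C=0\ \text{a.s.}\},
\]
together with the analogous statement for $J^A+J^C$.

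The second step invokes Theorem~\ref{mertens}. Under $q>0$ $m_A^p$-a.e., it gives $I^A=0$ a.s. if and only if $J^A=0$ a.s. Under $G>0$ $m_C^o$-a.e., it gives $I^C=0$ a.s. if and only if $J^C=0$ a.s. Chaining these two equivalences with the decomposition from the first step yields the claimed equivalence. The point to stress here is that the endpoint conventions must match those in Theorem~\ref{mertens}: non-strict for the predictable component $A$, and strict for the optional component $C$. This matching is exactly what allows Theorem~\ref{predictable} and Theorem~\ref{optional} to be applied, through Theorem~\ref{mertens}, without any boundary correction.

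The Cox case follows from the last assertion of Theorem~\ref{mertens}. Alternatively, one can apply Corollary~\ref{coxcor} to each component directly, since $G_t=e^{-\Lambda_t}\ge e^{-\Lambda_T}>0$ on $[0,T]$. Proposition~\ref{prop:cox-reflection-boundary} further shows that, in this setting, the choice between $\mathbf1_{\{t<\tau\}}$ and $\mathbf1_{\{t\le\tau\}}$ for $C$ is immaterial.

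There is no genuine obstacle in this argument. The only point needing care is to avoid deducing the componentwise vanishing from a single expectation identity for the sum. The two components are projected with respect to different $\sigma$-fields: predictable for $A$ and optional for $C$. A joint identity of the form in Lemma~\ref{zero} is therefore not directly available, and the reduction must go through the componentwise splitting described in the first step.
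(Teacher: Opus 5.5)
Your proof is correct and follows the paper's argument exactly: split each sum of nonnegative integrals into its two components, then apply the two componentwise equivalences of Theorem~\ref{mertens}. Your closing caveat is overstated, though harmlessly so. Adding \eqref{predEq} and \eqref{eq:optional-projection-identity} by linearity of expectation does give a joint expectation identity to which Lemma~\ref{zero} applies, even though the two components are projected on different $\sigma$-fields.
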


\begin{proof}
	All four Stieltjes integrals are
	nonnegative.
	Consequently, each sum vanishes
	almost surely if and only if both
	of its terms vanish almost surely.
	The conclusion follows by applying
	the two equivalences of
	Theorem~\ref{mertens}.
\end{proof}

\begin{example}[Failure of contact transfer
	without survival positivity]
	\label{ex:mertens-no-positivity}
	
	Let $T=2$, let $\mathbb F$ be the
	trivial filtration, and take the
	deterministic default time $\tau=1$.
	
	Define
	\[
	C_t=\mathbf1_{\{t\ge3/2\}},
	\qquad
	H^C_t=1.
	\]
	
	Then
	$
	\int_{(0,T]}H^C_t\,dC_t=1,
	$
	whereas
	$
	\int_{(0,T]}
	H^C_t\mathbf1_{\{t<\tau\}}\,dC_t=0.
	$
	Thus,
	$
	\int_{(0,T]}
	H^C_t\mathbf1_{\{t<\tau\}}\,dC_t=0
	$
	does not imply
	$
	\int_{(0,T]}H^C_t\,dC_t=0.
	$
	Indeed, the survival process satisfies
	$
	G_t=\mathbf1_{\{t<1\}},
	$
	so that $G=0$ on the support of $dC$.
	
	This example illustrates the necessity
	of the support-positivity condition
	appearing in Theorem~\ref{optional}
	and Theorem~\ref{mertens}.
\end{example}

	
	The preceding results transfer the
	predictable and optional contact
	conditions componentwise.
	In the Cox setting, Proposition~
	\ref{prop:cox-reflection-boundary}
	also shows that the strict and
	inclusive default-time stopping
	conventions coincide for the
	Stieltjes measure $dC$.


\subsection{Pointwise jump contact in Mertens reflection}
\label{subsec:mertens-jump-contact}

The componentwise transfer result of
Theorem~\ref{mertens} is formulated in terms
of nonnegative Stieltjes contact integrals.
For purely discontinuous reflection, however,
the optional minimality condition is often
expressed directly at the jump times of the
increasing process $C$.

The following proposition shows that these
two formulations are equivalent under the
optional support condition already established
in Theorem~\ref{optional}.

\begin{proposition}[Transfer of pointwise
	jump-contact minimality]
	\label{prop:pointwise-jump-contact}
	
	Let $C$ satisfy the standing assumptions of
	Section~\ref{sec:preliminaries} and suppose,
	in addition, that $C$ is purely discontinuous
	and $\mathbb F$-optional.
	
	Let $H^C$ be a nonnegative
	$\mathbb F$-optional process.
	
	Assume that
	\[
	G>0\quad m_C^o\text{-a.e.}
	\]
	
	Then the following conditions are equivalent:
	\[
	{
		H^C_t\Delta C_t=0
		\quad\text{for every }t\in(0,T],
		\quad\text{a.s.}
	}
	\]
	and
	\[
	{
		\int_{(0,T]}
		H^C_t\mathbf1_{\{t<\tau\}}\,dC_t
		=0
		\quad\text{a.s.}
	}
	\]
	
\end{proposition}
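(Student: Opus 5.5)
The plan is to route the argument through the unstopped Stieltjes integral $\int_{(0,T]}H^C_t\,dC_t$. The first equivalence is purely pathwise and uses that $C$ is purely discontinuous. The second is exactly the optional criterion of Theorem~\ref{optional}, applied with $a=C$ and $H=H^C$, which we may invoke because $G>0$ $m_C^o$-a.e. is assumed.

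\emph{Step 1 (pathwise identity).} Since $C$ is right-continuous, increasing and purely discontinuous, for almost every $\omega$ the measure $dC(\omega,\cdot)$ on $(0,T]$ is purely atomic:
\[
dC_t=\sum_{0<s\le T}\Delta C_s\,\delta_s(dt).
\]
Hence, for the nonnegative process $H^C$,
\[
\int_{(0,T]}H^C_t\,dC_t=\sum_{0<s\le T}H^C_s\Delta C_s .
\]
This sum runs over at most countably many times, namely the jump times of $C(\omega)$. All its terms are nonnegative, and each is finite with value in $[0,\infty)$, since $H^C$ is real-valued and $\Delta C_s\le C_T<\infty$ a.s. A sum of nonnegative terms vanishes if and only if every term vanishes. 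Therefore, outside a null set,
\[
\int_{(0,T]}H^C_t\,dC_t=0
\iff
H^C_t\Delta C_t=0 \text{ for every } t\in(0,T].
\]
At non-jump times the product is trivially zero, so the quantifier over all $t$ costs nothing. This shows that the pointwise jump-contact condition holding a.s. is equivalent to $\int_{(0,T]}H^C_t\,dC_t=0$ a.s.

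\emph{Step 2 (transfer).} By Theorem~\ref{optional}, under $G>0$ $m_C^o$-a.e. and for the nonnegative optional integrand $H^C$, we have
\[
\int_{(0,T]}H^C_t\,dC_t=0 \text{ a.s.}
\iff
\int_{(0,T]}H^C_t\mathbf1_{\{t<\tau\}}\,dC_t=0 \text{ a.s.}
\]
This is also the second line of Theorem~\ref{mertens}. Chaining this with Step 1 gives the proposition.

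The only point requiring care is Step 1, specifically the meaning of "purely discontinuous". The intended meaning is that $C_t=\sum_{s\le t}\Delta C_s$ pathwise, with no continuous (including singular continuous) part. If $dC$ had a continuous part, the pointwise condition would say nothing about it, and the equivalence would fail. So we will state explicitly that the atomic decomposition of $dC$ is exactly this definition.

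A minor measurability point is that $\{H^C_t\Delta C_t=0\ \forall t\}$ is an event. We can avoid it by noting that this set coincides a.s. with $\{\int H^C\,dC=0\}$, which is measurable. Alternatively, we can write $\sum_s H^C_s\Delta C_s$ along an exhausting sequence of stopping times $(\sigma_{n,k})$ for the jumps of $C$, as in the proof of Proposition~\ref{prop:cox-reflection-boundary}. No new projection argument is needed.
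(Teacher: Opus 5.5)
Your proposal is correct and matches the paper's proof: the pathwise identity $\int_{(0,T]}H^C_t\,dC_t=\sum_{0<t\le T}H^C_t\Delta C_t$ for the purely discontinuous increasing $C$ reduces the pointwise jump-contact condition to the vanishing of the unstopped integral, and Theorem~\ref{optional}, under $G>0$ $m_C^o$-a.e., transfers this to the stopped integral. Your added remarks, that ``purely discontinuous'' must exclude any singular continuous part of $dC$ and that the pointwise event can be handled via the measurable event $\{\int_{(0,T]} H^C_t\,dC_t=0\}$, are sound refinements that the paper leaves implicit.
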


\begin{proof}
	Since $C$ is purely discontinuous and
	increasing, its Stieltjes measure is
	concentrated on its jump times. Therefore,
	for every nonnegative optional process
	$H^C$,
	\[
	\int_{(0,T]}H^C_t\,dC_t
	=
	\sum_{0<t\le T}H^C_t\Delta C_t.
	\]
	
	Every term in this sum is nonnegative.
	Consequently,
	\[
	\int_{(0,T]}H^C_t\,dC_t=0
	\quad\text{a.s.}
	\]
	if and only if
	\[
	H^C_t\Delta C_t=0
	\quad\text{for every }t\in(0,T],
	\quad\text{a.s.}
	\]
	
	On the other hand, the positivity
	assumption
	\[
	G>0\quad m_C^o\text{-a.e.}
	\]
	allows us to apply
	Theorem~\ref{optional}, yielding
	\[
	\int_{(0,T]}H^C_t\,dC_t=0
	\quad\text{a.s.}
	\]
	if and only if
	\[
	\int_{(0,T]}
	H^C_t\mathbf1_{\{t<\tau\}}\,dC_t
	=0
	\quad\text{a.s.}
	\]
	
	Combining these two equivalences
	completes the proof.
\end{proof}

\begin{corollary}[Pointwise contact for an
	irregular obstacle]
	\label{cor:mertens-pointwise-obstacle}
	
	Suppose that a compatible reduction of
	an irregular reflected BSDE has already
	identified the optional contact integrand
	as
	\[
	H^C_t=Y_t-\zeta_t\geq0,
	\]
	where $Y$ and $\zeta$ are
	$\mathbb F$-optional processes.
	
	Under the assumptions of
	Proposition~\ref{prop:pointwise-jump-contact},
	the following conditions are equivalent:
	\[
	{
		(Y_t-\zeta_t)\Delta C_t=0
		\quad\text{for every }t\in(0,T],
		\quad\text{a.s.}
	}
	\]
	and
	\[
	{
		\int_{(0,T]}
		(Y_t-\zeta_t)
		\mathbf1_{\{t<\tau\}}\,dC_t
		=0
		\quad\text{a.s.}
	}
	\]
\end{corollary}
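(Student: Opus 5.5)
The plan is to obtain the statement as a direct specialization of Proposition~\ref{prop:pointwise-jump-contact}, taking $H^C_t:=Y_t-\zeta_t$. The first step is to check that this choice satisfies the hypotheses of that proposition. Since $Y$ and $\zeta$ are $\mathbb F$-optional, their difference is $\mathbb F$-optional. By assumption, the compatible reduction provides $H^C\geq0$. The remaining requirements are inherited directly from Proposition~\ref{prop:pointwise-jump-contact}: $C$ is purely discontinuous, $\mathbb F$-optional, and satisfies the standing assumptions, and the support positivity $G>0$ holds $m_C^o$-a.e. One technical point needs care. Nonnegativity must hold for every $t\in(0,T]$ outside a single null set, that is, up to indistinguishability. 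Otherwise the Stieltjes integral against $dC$ could pick up negative values at the jump times. If nonnegativity were only known for each fixed $t$, I would upgrade it with the optional section theorem, since $\{Y<\zeta\}$ is an optional set.

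With the hypotheses verified, I would apply Proposition~\ref{prop:pointwise-jump-contact}. It gives the equivalence between $H^C_t\Delta C_t=0$ for all $t$, a.s., and the vanishing of $\int_{(0,T]}H^C_t\mathbf1_{\{t<\tau\}}\,dC_t$. Substituting $H^C=Y-\zeta$ then yields exactly the two displayed conditions.

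For a more self-contained check, the same argument can be written out in two links. The first link uses the fact that $C$ is purely discontinuous, so $\int(Y_t-\zeta_t)\,dC_t=\sum_{0<t\le T}(Y_t-\zeta_t)\Delta C_t$. This is a sum of nonnegative terms, so it vanishes exactly when every term does. The second link uses Theorem~\ref{optional}, together with $G>0$ $m_C^o$-a.e., to pass between the unstopped integral and the integral stopped strictly before $\tau$.

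The only step I would treat with care is this nonnegativity and measurability check on $H^C$; everything else is bookkeeping. I would add one remark at the end. In the Cox case, Proposition~\ref{prop:cox-reflection-boundary} shows that the strict indicator may be replaced by $\mathbf1_{\{t\le\tau\}}$ without changing the conclusion.
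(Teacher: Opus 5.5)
Your proposal is correct and follows the paper's own argument, which is the one-line specialization of Proposition~\ref{prop:pointwise-jump-contact} with $H^C=Y-\zeta$; your two-link unpacking simply reproduces that proposition's proof. One small correction to your side remark: the optional section theorem upgrades $Y_S\geq\zeta_S$ a.s.\ for every stopping time $S$ to indistinguishability, but $Y_t\geq\zeta_t$ a.s.\ for each fixed deterministic $t$ is not enough for irregular processes (for example, $Y-\zeta=-\mathbf 1_{\{t=S\}}$ with $S$ a stopping time with atomless law), so the nonnegativity hypothesis should be read up to indistinguishability, as you otherwise assume.
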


\begin{proof}
	Apply Proposition~
	\ref{prop:pointwise-jump-contact}
	with
	\[
	H^C=Y-\zeta.
	\]
\end{proof}

\begin{remark}[Predictable versus optional
	reflection]
	\label{rem:mertens-predictable-optional}
	
	The preceding equivalence concerns only
	the purely discontinuous optional
	reflection component $C$.
	
	The predictable component $A$ is governed
	by the predictable contact condition
	involving the appropriate left-upper
	envelope of the obstacle. Its transfer
	is already covered by
	Theorem~\ref{predictable} and
	Theorem~\ref{mertens}.
	Thus, the two reflection components are
	treated separately, in accordance with
	their distinct measurability and
	minimality structures.\footnote{	In particular, the pointwise jump-contact
		equivalence does not establish the
		reconstruction of the complete Mertens
		reflection term $A+C_{-}$, nor does it
		imply a reduction--reconstruction
		theorem for arbitrary enlarged-filtration
		reflected BSDEs.}
\end{remark}

\subsection{Optional and predictable reflection:
	distinct minimality structures}
\label{subsec:optional-predictable-reflection}

The contact-transfer results established above
apply to nonnegative integrands of the appropriate
measurability class. Their application to reflected
BSDEs, however, requires a distinction between
optional and predictable obstacles.

For an irregular optional obstacle, the associated
strong optional supermartingale may admit a
Mertens decomposition involving both a
predictable increasing process $A$ and a purely
discontinuous optional increasing process $C$.
The corresponding minimality conditions are
governed by different contact mechanisms,
as discussed in Theorem~\ref{mertens} and
Proposition~\ref{prop:pointwise-jump-contact}.

In contrast, reflected BSDEs with predictable
obstacles are formulated through predictable
stopping times and require the corresponding
predictable notion of reflection. Their
minimality conditions cannot, in general,
be obtained by replacing an optional obstacle
with a predictable one in the usual
continuous-obstacle formulation.

\begin{remark}[Compatibility of contact conventions]
	\label{rem:reflection-conventions}
	
	The optional and predictable contact-transfer
	criteria concern the measures and integrands
	specified in their respective formulations.
	They do not identify the reflection processes
	of different reflected BSDE models.
	
	In particular, when a Mertens reflection term
	is written as $A+C_{-}$, the jump convention
	for $C$ must be distinguished from that of
	the process $C_{-}$.
	
	If $C$ is right-continuous, its jumps are
	$\Delta C_t=C_t-C_{t-}$, whereas the
	corresponding right jump of $C_{-}$ is
	\[
	\Delta_+(C_{-})_t
	=
	C_t-C_{t-}
	=
	\Delta C_t.
	\]
	
	Thus, the contact condition must be
	formulated using the appropriate jump
	of the increasing process appearing
	in the chosen Mertens decomposition.
	
	The transfer results of
	Theorems~\ref{optional},
	\ref{predictable}, and
	\ref{mertens} apply only after these
	contact integrands and endpoint
	conventions have been identified.
\end{remark}

\section{Quantitative reflection-support transfer
	and the limits of martingale invariance}
\label{sec:quantitative-support}

The preceding optional and predictable
contact-transfer theorems give sharp
qualitative criteria for recovering
reflection minimality from its restriction
to the pre-default interval.

We now derive a quantitative consequence
of these criteria. Unlike the equivalence
of exact contact identities, this estimate
also controls approximate reflection errors
and is therefore relevant to approximation
and penalisation procedures.

\begin{proposition}[Quantitative contact transfer]
	\label{prop:quantitative-contact}
	
	Let $(a_t)$ satisfy the standing assumptions
	of Section~\ref{sec:preliminaries}, and
	let $H$ be a nonnegative admissible
	contact integrand.
	
	Consider either of the following settings:
	
	\begin{enumerate}
		\item[(i)] $(a_t)$ and $H$ are
		$\mathbb F$-optional, and
		\[
		G_t\geq\varepsilon
		\quad m_a^o\text{-a.e.};
		\]
		
		\item[(ii)] $(a_t)$ and $H$ are
		$\mathbb F$-predictable, and
		\[
		q_t\geq\varepsilon
		\quad m_a^p\text{-a.e.},
		\]
	\end{enumerate}
	for some constant $\varepsilon>0$.
	
	Then, in the optional case,
	\[
	{
		\varepsilon\,
		\mathbb E\int_{(0,T]}H_t\,da_t
		\leq
		\mathbb E\int_{(0,T]}
		H_t\mathbf1_{\{t<\tau\}}\,da_t
		\leq
		\mathbb E\int_{(0,T]}H_t\,da_t.
	}
	\]
	
	In the predictable case,
	\[
	{
		\varepsilon\,
		\mathbb E\int_{(0,T]}H_t\,da_t
		\leq
		\mathbb E\int_{(0,T]}
		H_t\mathbf1_{\{t\leq\tau\}}\,da_t
		\leq
		\mathbb E\int_{(0,T]}H_t\,da_t.
	}
	\]
	The inequalities are understood in
	the extended nonnegative sense.
\end{proposition}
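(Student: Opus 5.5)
The plan is to reduce both estimates to the projection identities already established, \eqref{eq:optional-projection-identity} in the optional case and \eqref{predEq} in the predictable case, and then to compare integrands pointwise against the relevant mean measure. The upper bounds in both cases are immediate and need no projection. Since $0\leq\mathbf1_{\{t<\tau\}}\leq\mathbf1_{\{t\leq\tau\}}\leq1$ and $H\geq0$, integrating pathwise against the positive measure $da$ and then taking expectations gives
\[
\mathbb E\int_{(0,T]}H_t\mathbf1_{\{t<\tau\}}\,da_t\leq\mathbb E\int_{(0,T]}H_t\mathbf1_{\{t\leq\tau\}}\,da_t\leq\mathbb E\int_{(0,T]}H_t\,da_t.
\]
These expectations are in $[0,\infty]$.

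For the lower bound in case (i), I use Step 2 of the proof of Theorem~\ref{optional}. There, \eqref{eq:optional-projection-identity} was extended by monotone approximation to all nonnegative $\mathbb F$-optional $H$, which gives
\[
\mathbb E\int_{(0,T]}H_t\mathbf1_{\{t<\tau\}}\,da_t=\mathbb E\int_{(0,T]}H_tG_t\,da_t=\int_{\Omega\times(0,T]}HG\,dm_a^o.
\]
The second equality is the mean-measure integration formula of Section~\ref{sec:preliminaries}. The hypothesis $G\geq\varepsilon$ $m_a^o$-a.e., together with $H\geq0$, gives $HG\geq\varepsilon H$ $m_a^o$-a.e. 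Integrating against $m_a^o$ yields $\int HG\,dm_a^o\geq\varepsilon\int H\,dm_a^o=\varepsilon\,\mathbb E\int_{(0,T]}H_t\,da_t$. This is valid in the extended sense because all quantities are nonnegative.

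Case (ii) follows the same route, using \eqref{predEq} from Theorem~\ref{predictable}. That identity holds for arbitrary nonnegative predictable $H$ by the monotone truncation $H\wedge n$ carried out there, and it gives
\[
\mathbb E\int_{(0,T]}H_t\mathbf1_{\{t\leq\tau\}}\,da_t=\int_{\Omega\times(0,T]}Hq\,dm_a^p\geq\varepsilon\int_{\Omega\times(0,T]}H\,dm_a^p,
\]
using $q\geq\varepsilon$ $m_a^p$-a.e.

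The only point needing care is that the projection identities must be available for unbounded $H$, since the statement is in the extended sense. This is already covered by the monotone-convergence extensions in Theorems~\ref{optional} and~\ref{predictable}, so I do not expect a genuine obstacle. I will also remark that the $m_a$-a.e. form of the lower bound on $G$ or $q$ is exactly what the mean-measure formulation allows, with no pathwise assumption needed.
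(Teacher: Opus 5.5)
Your proposal is correct and follows essentially the paper's route. Both arguments rest on the projection identity $\mathbb E\int H\mathbf1_{\{t<\tau\}}\,da=\mathbb E\int HG\,da$ from Theorem~\ref{optional} (resp.\ $\mathbb E\int H\mathbf1_{\{t\le\tau\}}\,da=\mathbb E\int Hq\,da$ from Theorem~\ref{predictable}), and then integrate the bound $G\ge\varepsilon$ (resp.\ $q\ge\varepsilon$) against $H$ and the mean measure. The only cosmetic difference is that you get the upper bound pathwise from $\mathbf1_{\{t<\tau\}}\le\mathbf1_{\{t\le\tau\}}\le1$, whereas the paper uses $G\le1$ (resp.\ $q\le1$) after projecting; the two are equivalent.
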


\begin{proof}
	We first consider the optional case.
	
	By the projection identity established
	in Theorem~\ref{optional},
	\[
	\mathbb E\int_{(0,T]}
	H_t\mathbf1_{\{t<\tau\}}\,da_t
	=
	\mathbb E\int_{(0,T]}H_tG_t\,da_t.
	\]
	
	Since
	\[
	\varepsilon\leq G_t\leq1
	\quad m_a^o\text{-a.e.},
	\]
	integration against the nonnegative
	function $H$ gives
	\[
	\varepsilon\,
	\mathbb E\int_{(0,T]}H_t\,da_t
	\leq
	\mathbb E\int_{(0,T]}H_tG_t\,da_t
	\leq
	\mathbb E\int_{(0,T]}H_t\,da_t.
	\]
	
	This proves the first assertion.
	
	The predictable case follows
	identically from
	Theorem~\ref{predictable},
	using the predictable projection
	$q$ in place of $G$.
\end{proof}

\begin{corollary}[Stability of approximate
	contact conditions]
	\label{cor:approximate-contact}
	
	Under either setting of
	Proposition~\ref{prop:quantitative-contact},
	let $(H^n)_{n\geq1}$ be a sequence
	of nonnegative contact integrands
	of the corresponding measurability
	class.
	
	Suppose that the constant
	$\varepsilon>0$ is independent of $n$.
	
	If the corresponding stopped
	contact errors satisfy
	\[
	\mathbb E\int_{(0,T]}
	H^n_t\mathbf1_{\{t<\tau\}}\,da_t
	\longrightarrow0
	\]
	in the optional case, or
	\[
	\mathbb E\int_{(0,T]}
	H^n_t\mathbf1_{\{t\leq\tau\}}\,da_t
	\longrightarrow0
	\]
	in the predictable case, then
	\[
	{
		\mathbb E\int_{(0,T]}H^n_t\,da_t
		\longrightarrow0.
	}
	\]
\end{corollary}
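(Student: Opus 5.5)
The corollary follows directly from the lower bound in Proposition~\ref{prop:quantitative-contact}. I apply that proposition to each integrand $H^n$ separately. The only point to check is that its hypotheses hold uniformly in $n$. The positivity condition $G\geq\varepsilon$ $m_a^o$-a.e. (or $q\geq\varepsilon$ $m_a^p$-a.e.) concerns only the increasing process $a$ and the survival projection, not the integrand. So the same constant $\varepsilon$ serves for every $n$, which is exactly what the hypothesis that $\varepsilon$ is independent of $n$ guarantees. Each $H^n$ is nonnegative and of the correct measurability class, so the proposition applies term by term.

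In the optional case, the left inequality of Proposition~\ref{prop:quantitative-contact} gives, for every $n\geq1$,
\[
\mathbb E\int_{(0,T]}H^n_t\,da_t
\leq
\varepsilon^{-1}\,
\mathbb E\int_{(0,T]}H^n_t\mathbf1_{\{t<\tau\}}\,da_t .
\]
These quantities lie in $[0,\infty]$. By assumption the right-hand side tends to zero, so it is finite for all large $n$. Hence the left-hand side is eventually finite, and it is squeezed to zero by the factor $\varepsilon^{-1}$.

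The predictable case is identical. I would use the predictable branch of the proposition, whose weight is $q$ and whose stopped integrand carries the non-strict indicator $\mathbf1_{\{t\leq\tau\}}$, in place of the optional inequality.

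There is no genuine obstacle. The only care needed is with the extended-value arithmetic. Multiplying by $\varepsilon^{-1}$ is legitimate in $[0,\infty]$, and convergence to $0$ forces eventual finiteness, so no $\infty-\infty$ issues arise.
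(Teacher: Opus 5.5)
Your proposal is correct and follows the paper's own argument: apply the lower bound of Proposition~\ref{prop:quantitative-contact} to each $H^n$ with the same $n$-independent $\varepsilon$, then divide by $\varepsilon$. Your added remarks, that the positivity hypothesis involves only $a$ and the survival projection, and the handling of extended-valued arithmetic, make explicit what the paper's one-line proof leaves implicit.
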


\begin{proof}
	The conclusion follows immediately
	from the lower bounds of
	Proposition~\ref{prop:quantitative-contact},
	with the same constant
	$\varepsilon$ for every $n$.
\end{proof}

\begin{remark}[Application to penalisation
	and approximation schemes]
	\label{rem:penalisation-support}
	
	Corollary~\ref{cor:approximate-contact}
	applies whenever an approximation scheme
	produces nonnegative contact errors
	and a uniform lower bound for the
	relevant survival projection.
	
	If the reflection measures also depend
	on $n$, the same argument remains valid
	provided that the lower bound holds
	uniformly with respect to their
	corresponding reference measures.
	The estimate transfers convergence of
	the contact errors in expectation.
	It does not, by itself, establish
	convergence of the approximating
	solutions or identify the limiting
	reflection measure.
\end{remark}

\begin{remark}[Obstruction to unrestricted
	invariance-time reconstruction]
	\label{rem:invariance-obstruction}
	
	The martingale-invariance property
	does not, by itself, imply a positive
	lower bound for either survival
	projection on the support of a
	reflection measure.
	
	As illustrated by
	Example~\ref{ex:mertens-no-positivity},
	a reflection measure may charge a
	region where the relevant survival
	projection vanishes. In such a case,
	the stopped contact integral may
	vanish even though the corresponding
	unstopped integral is strictly positive.
	Thus, martingale transport alone
	cannot guarantee recovery of the
	full reflection minimality condition.
	
	The qualitative support criteria
	of Theorems~\ref{optional} and
	\ref{predictable}, and the quantitative
	bounds established above, remain
	separate requirements in any
	general reduction--reconstruction
	argument.
\end{remark}

\begin{example}[Quantitative transfer for
	an atomic reflection measure]
	\label{ex:quantitative-atomic}
	
	Consider the Cox construction of
	Section~\ref{sec:cox-survival} with constant
	intensity $\lambda>0$, so that
	\[
	\Lambda_t=\lambda t,
	\qquad
	G_t=e^{-\lambda t}.
	\]
	
	Fix $t_0\in(0,T)$ and let
	$
	a_t=\mathbf1_{\{t\ge t_0\}}.
	$
	Thus, $da$ is concentrated at $t_0$.
	
	For each $n\ge1$, consider the
	nonnegative $\mathbb F$-optional
	contact error
	\[
	H^n_t=\frac1n.
	\]
	
	The full contact error is
	\[
	\mathbb E\int_{(0,T]}H^n_t\,da_t
	=\frac1n.
	\]
	
	On the other hand, the stopped
	contact error satisfies
	\[
	\begin{aligned}
		\mathbb E\int_{(0,T]}
		H^n_t\mathbf1_{\{t<\tau\}}\,da_t
		&=\frac1n\,
		\mathbb P(\tau>t_0)
		=\frac{e^{-\lambda t_0}}{n}.
	\end{aligned}
	\]
	
	Consequently,
	\[
	{
		\mathbb E\int_{(0,T]}H^n_t\,da_t
		=
		e^{\lambda t_0}
		\mathbb E\int_{(0,T]}
		H^n_t\mathbf1_{\{t<\tau\}}\,da_t.
	}
	\]
	
	Since $t_0\le T$, we also have
	\[
	{
		\mathbb E\int_{(0,T]}H^n_t\,da_t
		\le
		e^{\lambda T}
		\mathbb E\int_{(0,T]}
		H^n_t\mathbf1_{\{t<\tau\}}\,da_t \leq e^{\lambda T}
		\mathbb E\int_{(0,T]}
		H^n_t\mathbf\,da_t .
	}
	\]
	
	In particular, the stopped contact
	errors converge to zero if and only if
	the full contact errors converge to zero.
	
	This example shows that the quantitative
	support estimate remains valid when
	the reflection measure is singular
	with respect to Lebesgue measure
	and possesses an atom.
\end{example}

\section{Open problem: invariance-time reconstruction
	beyond Cox models}
\label{sec:invariance-open}

The Cox reconstruction theorem established above
relies on two particularly favorable features:
the immersion property and the explicit
compensator of the default indicator.
These properties allow the martingale,
finite-variation, and default-jump terms
to be identified separately.

A natural question is whether an analogous
reduction--reconstruction principle can be
established for the more general invariance
times introduced by Cr\'epey and
Song~\cite{CrepeySong}.

\medskip

\noindent
\textbf{The invariance-time framework.}
Recall that the framework of
Cr\'epey and Song involves two filtrations
$\mathbb F\subseteq\mathbb G$ and a
$\mathbb G$-stopping time $\tau$, together
with probability measures $\mathbb P$
and $\mathbb Q$, where $\mathbb P$ is
equivalent to the restriction of
$\mathbb Q$ to $\mathcal F_T$.

Under the corresponding reduction
hypotheses, $\tau$ is an invariance time
if $\mathbb P$ can be chosen so that
every $(\mathbb F,\mathbb P)$-local
martingale $N$, stopped immediately
before $\tau$, becomes a
$(\mathbb G,\mathbb Q)$-local martingale.

This property transports a class of
martingales across both filtration
enlargement and change of probability
measure. It does not, however,
automatically provide a complete
representation of the martingale terms
appearing in an enlarged-filtration
reflected BSDE.

In particular, the invariance property
does not by itself identify the default
jump, the compensator of the default
indicator, or the increasing processes
appearing in a Mertens reflection
decomposition.
These additional structures are
essential when the obstacle is
irregular and the reflection term
contains both predictable and
purely discontinuous optional
components.

\medskip

We therefore formulate the following
questions, which remain beyond the
scope of the preceding results.

\begin{enumerate}[label=\textbf{Q\arabic*.}]
	
	\item \emph{Martingale transport and
		default compensation.}
	
	Under what additional structural assumptions
	on the underlying filtrations can the
	invariance-time transformation be combined
	with a martingale representation theorem
	in the enlarged filtration?\footnote{
		A related problem has been studied
		by Crépéy~\cite{Crepey}.}
	
	More precisely, when can the
	martingale part of an enlarged
	reflected BSDE be decomposed into
	a transported reference-filtration
	martingale and a compensated
	default martingale, with a
	well-defined predictable
	compensator?
	
	\item \emph{Optional obstacle reduction.}
	
	Which regularity, measurability,
	and aggregation conditions ensure
	that an enlarged-filtration optional
	obstacle admits a compatible
	$\mathbb F$-optional reduction
	on the pre-default interval?
	
	In particular, how are its
	left-upper envelope and the
	right-jump structure of the
	corresponding reflected solution
	preserved under such a reduction?
	
	\item \emph{Mertens reconstruction.}
	
	Suppose the enlarged reflected
	BSDE involves the finite-variation
	reflection term
	\[
	A_t+C_{t-},
	\]
	where $A$ is predictable and
	increasing, while $C$ is optional,
	purely discontinuous, and increasing.
	
	Under which hypotheses can these
	two reflection components be reduced
	and reconstructed separately?
	
	How should possible jumps at
	$\tau$ be treated so that the
	reconstructed reflection term
	has the correct boundary behavior
	and creates no spurious mass
	at the random horizon?
	
	\item \emph{Existence and uniqueness.}
	
	Which conditions on the transformed
	driver, terminal condition,
	obstacle, and filtration allow
	the available existence and
	uniqueness results for reflected
	BSDEs with irregular obstacles
	to be applied in the reference
	filtration?
	
	Under which additional assumptions
	can the resulting solution be
	transported back to the enlarged
	filtration, yielding a genuine
	equivalence between the two
	reflected BSDE formulations?
	
\end{enumerate}

	The questions above concern the
	reconstruction of the entire reflected
	BSDE, rather than the transfer of
	its minimality conditions alone.
	
	Theorem~\ref{mertens} establishes
	componentwise transfer of predictable
	and optional contact identities
	under the appropriate support
	conditions.
	In the Cox setting, the positivity
	of the survival process yields
	particularly transparent criteria
	through Corollary~\ref{coxcor}.
	
	For a general invariance time,
	the relevant survival projections,
	reference measures, and probability
	measure changes must be handled
	consistently.
	Consequently, the contact-transfer
	results proved in this paper do not,
	by themselves, imply the existence,
	uniqueness, or reconstruction of
	solutions to irregular reflected
	BSDEs under arbitrary invariance
	times.

\section*{Conclusion}
\label{sec:conclusion}

This paper investigated the transfer of reflection
minimality conditions between a reference filtration
and an enlarged filtration under a random horizon,
with particular emphasis on the support of the
underlying reflection measures.

We established sharp necessary and sufficient
conditions for the equivalence of stopped and
unstopped contact identities through optional and
predictable dual projections. These criteria identify
when stopping at a random time preserves the
contact information carried by the reflection
measures, including when these measures possess
atomic or singular components.

For Cox random times, the explicit conditional
survival structure yields a strong transfer
principle. We extended the survival-weighting
identities to general reference-measurable random
measures and derived quantitative estimates under
uniform survival positivity. These estimates
provide a stability criterion for approximate
contact conditions.

In the Brownian Cox framework, we obtained an
explicit reconstruction of reflected BSDEs with
continuous obstacles. The reduced generator
incorporates the default intensity and the
prescribed recovery process, while the enlarged
solution is reconstructed through its Brownian,
default-jump, and reflection components.

For irregular obstacles, we extended the
contact-transfer analysis to the predictable
and purely discontinuous optional components
of Mertens reflection. In particular, we
clarified the relationship between integral
minimality and pointwise jump-contact conditions,
while respecting the distinct measurability
and endpoint conventions of the two components.

\paragraph{Scope of the results.}
The results of this paper establish sharp
support criteria for the transfer of optional
and predictable reflection-contact conditions,
together with their specialization to Cox
random times and quantitative stability under
uniform survival positivity.

In the Brownian Cox setting, the explicit
reconstruction theorem provides existence of
an enlarged-filtration solution and uniqueness
within the prescribed reference-filtration
reduction class. It does not establish
uniqueness among all possible
enlarged-filtration solutions.

For irregular obstacles, the results concern
the transfer of predictable and optional
minimality conditions under compatible
reductions; they do not establish a general
existence or reconstruction theorem.

Extensions to arbitrary invariance times
require additional assumptions concerning
martingale representation, obstacle reduction,
default jumps, and Mertens reflection,
as discussed in
Section~\ref{sec:invariance-open}.

The main contribution of the paper is therefore
to identify precisely which reflection-contact
properties can be transferred by measure-theoretic
support arguments, and which aspects of a complete
reflected BSDE reconstruction require additional
stochastic structure.

\end{document}